\newcommand{\mc}{\mathcal}
\newcommand{\mbf}{\bm} 			
\newcommand{\N}{ \ensuremath{\mathbb{N}}}  
\newcommand{\R}{ \ensuremath{\mathbb{R}}}  
\newcommand{\C}{ \ensuremath{\mathbb{C}}}  
\newcommand{\eps}{\epsilon}
\newcommand{\lam}{\lambda}
\providecommand\given{} 
\newcommand\SetSymbol[1][]{
   \nonscript\,#1\vert \allowbreak \nonscript\,\mathopen{}}
\DeclarePairedDelimiterX\set[1]{\lbrace}{\rbrace}{ \renewcommand\given{\SetSymbol[\delimsize]} #1 }  
\DeclarePairedDelimiterX\norm[1]{\lVert}{\rVert}{#1}  			
\DeclarePairedDelimiterX\inner[2]{\langle}{\rangle}{#1 \,, #2}  	
\DeclareMathOperator{\diag}{diag}								
\DeclarePairedDelimiterX\abs[1]{\lvert}{\rvert}{#1} 
\theoremstyle{plain}
  \newtheorem{theorem}{Theorem}[section]
  \newtheorem{lemma}[theorem]{Lemma}
  \newtheorem{proposition}[theorem]{Proposition}
  \newtheorem*{corollary}{Corollary}
\theoremstyle{remark}
  \newtheorem*{remark}{Remark}
\theoremstyle{definition}
  \newtheorem{definition}[theorem]{Definition}
\begin{document}
\title{Koopman principle eigenfunctions and linearization of diffeomorphisms}
\author{Ryan Mohr, Igor Mezi\'{c}}
\date{}

\maketitle

\begin{abstract}
This paper considers a nonlinear dynamical system on a complex, finite dimensional Banach space which has an asymptotically stable, hyperbolic fixed point.  We investigate the connection between the so-called principle eigenfunctions of the Koopman operator and the existence of a topological conjugacy between the nonlinear dynamics and its linearization in the neighborhood of the fixed point. The principle eigenfunctions generate an algebra of observables for the linear dynamics --- called the principle algebra --- which can be used to generate a sequence of approximate conjugacy maps in the same manner as it is done in normal form theory. Each element of the principle algebra has an expansion into eigenfunctions of the Koopman operator and composing an eigenfunction with one of the approximate topological conjugacies gives an approximate eigenfunction of the Koopman operator associated with the nonlinear dynamical system. When the limit of the approximate conjugacies exists and attention is restricted to real Banach spaces, a simple application of the Stone-Weierstrass theorem shows that both the principle algebra and the pull-back algebra --- defined by composing the principle algebra with the topological conjugacy --- are uniformly dense in either the space of continuous functions or the maximal ideal of continuous functions vanishing at the fixed point. The point is that, a priori, it is difficult to know which space of observables to use for dissipative nonlinear dynamical systems whose elements have spectral expansions into eigenfunctions. These results say that any continuous observable is arbitrarily close to one that has such an expansion.
\end{abstract}


\section{Introduction}
In this paper, we are interested in the connections between the principle eigenfunctions of the Koopman operator associated with a diffeomorphism of a Banach space with an asymptotically stable, hyperbolic fixed point and the existence of a topological conjugacy between the diffeomorphism and its linearization around the fixed point. As long as the principle eigenvalues satisfy the standard non-resonance conditions (see \cite{Arnold:1989wt,Wiggins:2003tz,Katok:1997th}) of normal form theory, the principle eigenfunctions generate a commutative algebra of polynomials from which a sequence of coordinate transformations that linearizes the diffeomorphism can be generated. More specifically, if $\mc A_{T'(0)}$ is the generated algebra and $T : X \to X$ is the diffeomorphism, then there are Banach-valued polynomials $\tau_{k} : X \to X$ in $X \otimes \mc A_{T'(0)}$ such that 
	\begin{equation}
	T_{k}(x) := (\tau_{k}^{-1} \circ T \circ \tau_{k})(x) = T'(x_0)(x - x_0) + R_{k+1}(x), \qquad (x\in X)
	\end{equation}
where $x_0$ is the fixed point and $R_{k+1}$ is a power series around $x_0$ such that $R_{k+1}(x) \in \mc{O}(\norm{x - x_0}^{k+1})$ as $x \to x_0$.

Our interest in this connection arose when investigating the construction of spaces of observables for nonlinear dissipative dynamics, each element of which has an expansion into eigenfunctions of the associated Koopman operator. The procedure for constructing such spaces is straight forward and proceeds by first defining a space of observables for the simpler linear dynamics. Let $U_{T}$ and $U_{T'(x_0)}$ be the Koopman operators associated with $T$ and $T'(x_0)$, respectively, and assume that we have constructed a space of observables $\mc F_{T'(x_0)}$ for $U_{T'(x_0)}$ of which every element has an expansion into eigenfunctions. If there exists a topological conjugacy $\tau : X \to X$ ($T'(x_0) = \tau^{-1} \circ T \circ \tau$), then every element of the space $\mc F_{T} := \mc F_{T'(x_0)} \circ \tau^{-1} = \set{f\circ \tau^{-1} \given f \in \mc F_{T'(x_0)}}$ has an expansion into eigenfunctions of $U_{T}$. This follows easily from the fact that if $\phi$ is an eigenfunction of $U_{T'(x_0)}$, then $\phi \circ \tau^{-1}$ is an eigenfunction of $U_{T}$ \cite{Budisic:2012cf}. However, getting a concrete representation of elements of $\mc F_{T}$ in terms of simpler functions can be difficult due to the action of $\tau^{-1}$ --- we know that $\mc F_{T}$ and $\mc F_{T'(x_0)}$ are isomorphic through $\tau$, but we do not know if they are \emph{identical}. If we can show that the two spaces are identical, or at least their uniform closures agree, then we can use the functions in $\mc F_{T'(x_0)}$ and its uniform closure as observables for the nonlinear dynamics and still have spectral expansions for each element.

In this paper, we answer the following questions.
\begin{enumerate}[(1)]
\item\label{question:1} Is there a sub-algebra of $C(X)$ of which every element has an expansion into eigenfunctions of $U_{T'(x_0)}$?
\item\label{question:2} Similarly, is there a sub-algebra of $C(X)$ of which every element has an expansion into eigenfunctions of $U_{T}$?
\item\label{question:3} If the above two sub-algebras exist, what is their relation?
\end{enumerate}

Answering \eqref{question:1}, the so-called principle eigenfunction can be used to generate an algebra of observables (called the principle algebra) which have expansions in eigenfunctions. This algebra can be pulled back to an algebra of observables for the nonlinear dynamics as long as a topological conjugacy exists. This conjugacy can actually be constructed from change of variables transformations that are formed from tensor products of the principle algebra with the Banach space (question \eqref{question:3}). The pullback algebra, formed via the topological conjugacy, also has spectral expansions since eigenfunctions are preserved when composing with topological conjugacies \cite{Budisic:2012cf} (question \eqref{question:2}).

If we work on real, finite dimensional, Banach spaces and look at the complexification of the linearization, the generated principle algebra separates points and is closed under complex conjugation. If we restrict the domain of the algebra to a compact neighborhood of the fixed point and appeal to the Stone-Weierstrass theorem, the uniform closure of the principle algebra is either the space of complex continuous functions on the compact neighborhood (if the constant functions are appended to the algebra) or the maximal ideal consisting of complex continuous functions which vanish at the fixed point. Furthermore, if we have built the topological conjugacy $\tau$ from the linear to nonlinear dynamics, we can pull back the principle algebra of observables for the linear dynamics to an algebra of observables for the nonlinear dynamics by composing functions $f$ in the principle algebra with $\tau^{-1}$; i.e., elements of the algebra of observables for the nonlinear dynamics have the form $f \circ \tau^{-1}$. Taking a compact neighborhood $K$ of the fixed point, the sets $L = \tau(K)$ and $K$ are homeomorphic via the conjugacy. This implies that the pull back algebra of observables for the nonlinear dynamics inherits the uniform closure properties of the principle algebra of observables; i.e., if the uniform closure of the principle algebra of observables is the entire space of complex continuous functions on $K$, then the uniform closure of the pull back algebra is the space of complex continuous functions on $L = \tau(K)$. If instead the uniform closure of the principle algebra of observables is the maximal ideal $I_{x_{0}}(K) = \set{ f \in C_{\C}(K) \given f(x_0) = 0}$, then the uniform closure of the pull back algebra is the maximal ideal $I_{x_{0}}(L) = \set{ g \in C_{\C}(L) \given g(x_0) = 0 }$, where $x_0$ is the fixed point.

One can interpret these results as saying that every continuous function is a perturbation of an observable that has an expansion into eigenfunctions of the associated Koopman operator. Or, if we choose a continuous function as an observable, then there is another observable arbitrarily close in the uniform norm that has a spectral expansion into Koopman eigenfunctions.

The rest of the paper is structured as follows. In section \ref{sec:preliminaries} we develop the framework for polynomial transformations between Banach spaces and the concept of principle eigenfunctions of the Koopman operator. The definition of the polynomial maps requires the introduction multilinear maps between Banach spaces. We record these definitions and basic properties in section \ref{subsec:polynomials}. In section \ref{subsec:principle-eigenfunctions}, we define the relevant Koopman operators, (Koopman) principle eigenfunctions, the non-resonance condition on the principle eigenvalues, and the principle algebra. Section \ref{sec:linear-nonlinear-observables} relates the principle algebra of observables for the linearized system with an algebra of observables for the nonlinear system. This is done by constructing a sequence of approximate topological conjugacies between the linear and nonlinear dynamics using the principle algebra. The results of this section follow normal form theory closely. Section \ref{sec:approximate-conjugacies-eigenfunctions} relates the spectral properties of the Koopman operator for the linearized system with those of the Koopman operator for the nonlinear system. It is shown that if an eigenfunction for the linearized system's Koopman operator is composed with the sequence of approximate topological conjugacies, then the resulting functions are approximate eigenfunctions of the Koopman operator of the nonlinear system at the same eigenvalue when restricted to the open unit ball. By approximate eigenfunction, we mean a function that is order $\epsilon$ away from a true eigenfunction (see \eqref{eq:approximate-eigenfunction}). In section \ref{sec:uniform-closure-of-A}, we restrict our Banach space to be real. After going through the standard complexification process of the space and the map $T$, an application of the Stone-Weierstrass theorem gives that both the principle algebra of observables for the linear system and its pullback to the nonlinear system is either uniformly dense in the continuous functions or the maximal ideal of continuous functions that vanish at the fixed point. The paper is concluded in section \ref{sec:conclusions}.

\section{Preliminaries}\label{sec:preliminaries}
Let $X$ be a finite-dimensional, complex Banach space of dimension $n$ with norm $\norm{\cdot}$. Denote by $\inner{\cdot}{\cdot} : X \times X^* \to \C$ the bilinear form defined as $\inner{x}{f} = f(x)$.

\subsection{Polynomials on Banach spaces}\label{subsec:polynomials}
In order to define Banach-valued polynomials on $X$, we need to define multilinear maps between Banach spaces; the polynomials are the diagonal of a multilinear map. Let $Y$ be a Banach space and $A : \prescript{m}{}{X} \to Y$ be an $m$-linear mapping; that is $A(x_1,\dots, x_m)$ is linear in each variable $x_i \in X$. The norm of an $m$-linear map is defined as
	\begin{equation}
	\norm{A} = \sup\set{ \norm{A(x_1,\dots,x_m)} \given x_i \in X, \norm{x_i} \leq 1}.
	\end{equation}
$A$ is called an $m$-linear form if $Y = \C$.
An $m$-linear map can be constructed from elements of the form $a(x_1,\dots, x_m) =  (\xi_{1}(x_1)\cdots \xi_{m}(x_{k}))\cdot y$, where $y \in Y$ and $\xi_{1},\dots, \xi_{m} \in X^{*}$. Note that there is a representation \cite[see][sec.\ 1.5]{Ryan:2002ku} of $\mc L(\prescript{m}{}{X}; Y)$ as a tensor product of the Banach space $Y$ with the space of $m$-linear forms on $X^m$:
	\begin{equation}\label{eq:vector-valued-m-linear-function}
	\mc L(\prescript{m}{}{X}; Y) = \mc L(\prescript{m}{}{X}; \C) \otimes Y.
	\end{equation}
For $m=0$, $\mc L(\prescript{m}{}{X}; Y) \cong Y$. If $f \in \mc L(\prescript{m}{}{X}; \C)$, then $f \otimes y  \in  \mc L(\prescript{m}{}{X}; \C)  \otimes Y$ is the map from $X$ to $Y$ defined as $(f \otimes y)(x) :=  f(x) y$.

Since $X$ is finite dimensional, there is a canonical representation for $m$-linear maps. Let $\set{e_1, \dots, e_n}$ be a basis for $X$ with coordinate functionals $\set{\phi_1, \dots, \phi_n } \subset X^*$ ($\inner{e_i}{\phi_j} = \delta_{i,j}$). Each $m$-linear map can be uniquely represented as
	\begin{equation}\label{eq:canonical-m-linear-map}
	A(x_1, \dots , x_m) = \sum_{j_1 = 1}^{n} \cdots \sum_{j_m = 1}^{n}  y_{j_1,\dots, j_m} \phi_{j_1}(x_1)\cdots \phi_{j_m}(x_m) \qquad (x_1,\dots, x_m \in X),
	\end{equation}
where $ y_{j_1,\dots, j_m} \in Y$. 

\begin{proof}[Proof of \eqref{eq:canonical-m-linear-map}]
Let $a_{i} = y_{i} \cdot (\xi_{i,1}\cdots \xi_{i,m}) \in \mc L(\prescript{m}{}{X}; Y)$, where $y_i \in Y$ and $\xi_{i,j} \in X^{*}$, and define $A \in \mc L(\prescript{m}{}{X}; Y)$ as $A = \sum_{i=1}^{s} a_{i}$. Let $\set{ \phi_{1}, \dots, \phi_{n} }$ be any basis for $X^{*}$. Then each $\xi_{i,j}$ has a unique expression in terms of the $\phi_{k}$'s as
	\begin{equation*}
	\xi_{i,j} = \sum_{k_{j}=1}^{n} c_{i,j,k_j} \phi_{k_{j}}, \qquad (c_{i,j,k_j} \in \C).
	\end{equation*}
Then $a_{i}$ becomes
	\begin{align*}
	a_{i} = y_{i} \cdot (\xi_{i,1}\cdots \xi_{i,m}) 
	&= y_{i} \Big( \sum_{k_{1}=1}^{n} c_{i,1,k_1} \phi_{k_{1}} \Big) \cdots \Big( \sum_{k_{m}=1}^{n} c_{i,m,k_m} \phi_{k_{m}} \Big) \\
	&= \sum_{k_{1}=1}^{n} \cdots \sum_{k_{m}=1}^{n}  y_{i} \Big(c_{i,1,k_1}\cdots c_{i,m,k_m} \Big) \Big(\phi_{k_{1}}  \cdots \phi_{k_{m}} \Big) \\
	&= \sum_{k_{1}=1}^{n} \cdots \sum_{k_{m}=1}^{n}  y_{i,k_{1},\dots, k_{m}} \Big(\phi_{k_{1}}  \cdots \phi_{k_{m}} \Big)
	\end{align*}
where $y_{i,k_{1},\dots,k_{m}} \in Y$ was defined as $y_{i,k_{1},\dots,k_{m}} = y_{i}\cdot  \Big(c_{i,1,k_1}\cdots c_{i,m,k_m} \Big)$. Finally,
	\begin{align*}
	A = \sum_{i=1}^{s} a_{i} &= \sum_{i=1}^{s} \sum_{k_{1}=1}^{n} \cdots \sum_{k_{m}=1}^{n}  y_{i,k_{1},\dots, k_{m}} \Big(\phi_{k_{1}}  \cdots \phi_{k_{m}} \Big) \\
	&= \sum_{k_{1}=1}^{n} \cdots \sum_{k_{m}=1}^{n}  y_{k_{1},\dots, k_{m}} \Big(\phi_{k_{1}}  \cdots \phi_{k_{m}} \Big)
	\end{align*}
where $y_{k_{1},\dots, k_{m}} := \sum_{i=1}^{s} y_{i,k_{1},\dots,k_{m}}$. This completes the proof of the canonical form \eqref{eq:canonical-m-linear-map}.
\end{proof}

If $A$ is an $m$-linear map, $x_1,\dots,x_n \in X$, and $\alpha_1,\dots, \alpha_n \in \N_0$, with $\sum_{1}^{n} \alpha_i = m$, we write 
	\begin{equation*}
	A(x_1^{\alpha_1},\dots, x_{n}^{\alpha_n}) = A(\underbrace{x_1,\dots, x_{1}}_{\alpha_1\text{ times}},\dots, \underbrace{x_n,\dots, x_n}_{\alpha_n\text{ times}}) .
	\end{equation*}
$A$ is a symmetric $m$-linear form if $A(x_1,\dots,x_m) = A(x_{\sigma(1)},\dots, x_{\sigma(m)})$ for all $\sigma \in S_m$, where $S_{m}$ is the group of permutations on $m$ elements. To each $m$-linear map $A$, there is an associated symmetric $m$-linear map that can be constructed as
	\begin{equation}\label{eq:symmetrized-m-linear-form}
	A^s(x_{1},\dots,x_{m}) = \frac{1}{m!} \sum_{\sigma \in S_{m}} A(x_{\sigma(1)},\dots, x_{\sigma(m)}).
	\end{equation}
Newton's binomial formula holds for symmetric $m$-linear forms:
	\begin{equation}\label{eq:binomial-formula}
	A^{s}((x+y)^{m}) = \sum_{j=0}^{m} \binom{m}{j} A^{s}(x^{m-j},y^j).
	\end{equation}

A mapping $P : X \to Y$ is said to be an $m$-homogeneous polynomial if there is an $m$-linear mapping $A : \prescript{m}{}{X} \to Y$ such that
	\begin{equation}\label{eq:polynomial-m-linear-map}
	P(x) = A(x,\dots,x) \qquad (x \in X).
	\end{equation}
It is clear that $P(x) = A(x,\dots, x) = A^{s}(x,\dots,x)$.
The norm of $P$ is defined as the quantity
	\begin{equation}
	\norm{P} = \sup \set{ \norm{P(x)} \given x\in X, \norm{x}\leq 1},
	\end{equation}
and for arbitrary $x\in X$, $\norm{P(x)} \leq \norm{P} \norm{x}^{m}$. Let $\mc P_a(\prescript{m}{}{X}; Y)$ denote the vector space of all $m$-homogeneous polynomials from $X$ into $Y$ and $\mc P(\prescript{m}{}{X}; Y)$ the subspace of all continuous polynomials.\footnote{These are not equal in general if $X$ is infinite-dimensional. However, in our case they are equivalent.} $\mc P_a(X; Y)$ will denote the algebraic direct sum of the vector spaces $\mc P_a(\prescript{m}{}{X}; Y)$ with $m \in \N_0$, while $\mc P(X; Y)$ is the algebraic direct sum of the subspaces $\mc P(\prescript{m}{}{X}; Y)$ for $m\in\N_0$.  By \eqref{eq:polynomial-m-linear-map} and \eqref{eq:vector-valued-m-linear-function}, $\mc P_a(\prescript{m}{}{X}; Y)$ has the representation 
	\begin{equation}\label{eq:vector-valued-m-polynomial-tensor-product}
	\mc P_a(\prescript{m}{}{X}; Y) =  \mc P_a(\prescript{m}{}{X}; \C) \otimes Y.
	\end{equation}
It follows from \eqref{eq:canonical-m-linear-map} that every $P \in \mc P_a(\prescript{m}{}{X};Y)$ can be uniquely represented as
	\begin{equation}\label{eq:canonical-polynomial-map}
	P(x) = \sum_{\abs{\mbf \alpha} = m} y_{\mbf \alpha} (\phi_1^{\alpha_1}(x) \cdots \phi_n^{\alpha_n}(x))
	\end{equation}
where $\alpha = (\alpha_1,\dots, \alpha_n) \in \N_0^n$, $\abs{\alpha} = \sum_{i=1}^{n} \abs{\alpha_i}$, and $y_{\alpha} \in Y$. Furthermore, if $Y = X$, then $P$ has a representation
	\begin{equation}
	P(x) = \sum_{i=1}^{n} \sum_{\abs{\alpha} = m} x_{i,\alpha} (\phi_1^{\alpha_1}(x)\cdots \phi_n^{\alpha_n}(x)) e_i
	\end{equation}
where $x_{i,\alpha} \in \C$. $\mc P_a(\prescript{m}{}{X};X)$ is spanned by elements 
	\begin{equation}
	p_{i,\mbf \alpha}(x) =  (\phi_1^{\alpha_1}(x)\cdots \phi_n^{\alpha_n}(x)) e_i  \qquad (i=1,\dots, n;~ \abs{\alpha}=m).
	\end{equation}
As in the one-dimensional case, we call a map $P : X \to Y$ a polynomial with degree at most $m$ if it can be represented as a sum
	\begin{equation}
	P(x) = \sum_{j=0}^{m} P_j(x)
	\end{equation}
where $P_{j} \in \mc P_a(\prescript{j}{}{X};Y)$.

The change in degree due to compositions of vector-valued polynomials works as one expects. The proofs of the three following results can be found in the appendix.

\begin{lemma}\label{lem:composition-homogeneous-polynomials}
Let $t, m \geq 2$, $Q_{t} \in \mc P(\prescript{t}{}{X};X)$, and $P = \sum_{k=\ell}^{m} P_{k}(x)$, where $P_k \in \mc P(\prescript{k}{}{X};X)$. Then $Q_{t} \circ P \in \bigoplus_{k=t\ell}^{tm} \mc P(\prescript{k}{}{X};X)$.
\end{lemma}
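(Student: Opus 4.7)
The plan is to reduce everything to the defining multilinear maps and then push the expansion through multilinearity. Let $A_{t}\in \mc L(\prescript{t}{}{X};X)$ denote the (symmetric) $t$-linear map associated with $Q_{t}$, so that $Q_{t}(y) = A_{t}(y,\dots,y)$ for every $y\in X$. Similarly, for each $k\in\{\ell,\dots,m\}$, let $B_{k}\in\mc L(\prescript{k}{}{X};X)$ satisfy $P_{k}(x) = B_{k}(x,\dots,x)$.

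First I would substitute $y = P(x) = \sum_{k=\ell}^{m} P_{k}(x)$ into $Q_{t}$ and expand using $t$-linearity of $A_{t}$:
\begin{equation*}
Q_{t}(P(x)) \;=\; A_{t}\!\left(\sum_{k_{1}=\ell}^{m} P_{k_{1}}(x),\,\dots,\,\sum_{k_{t}=\ell}^{m} P_{k_{t}}(x)\right) \;=\; \sum_{k_{1}=\ell}^{m}\cdots\sum_{k_{t}=\ell}^{m} A_{t}\bigl(P_{k_{1}}(x),\dots,P_{k_{t}}(x)\bigr).
\end{equation*}
Each summand on the right I would then recognize as a single homogeneous polynomial in $x$. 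Indeed, the map
\begin{equation*}
C_{k_{1},\dots,k_{t}}(x_{1,1},\dots,x_{1,k_{1}},\,\dots,\,x_{t,1},\dots,x_{t,k_{t}}) \;:=\; A_{t}\bigl(B_{k_{1}}(x_{1,1},\dots,x_{1,k_{1}}),\dots,B_{k_{t}}(x_{t,1},\dots,x_{t,k_{t}})\bigr)
\end{equation*}
is $(k_{1}+\cdots+k_{t})$-linear from $X^{k_{1}+\cdots+k_{t}}$ into $X$ (linearity in each slot follows because $A_{t}$ is linear in the $i$-th outer slot and $B_{k_{i}}$ is linear in each of its slots), and evaluating on the diagonal $x_{i,j}=x$ gives precisely $A_{t}(P_{k_{1}}(x),\dots,P_{k_{t}}(x))$. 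Hence $A_{t}(P_{k_{1}}(x),\dots,P_{k_{t}}(x)) \in \mc P(\prescript{k_{1}+\cdots+k_{t}}{}{X};X)$, and continuity is automatic since $X$ is finite-dimensional.

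Next I would collect terms by total degree. For any integer $K$ with $t\ell \leq K \leq tm$, define
\begin{equation*}
R_{K}(x) \;:=\; \sum_{\substack{k_{1},\dots,k_{t}\in\{\ell,\dots,m\}\\ k_{1}+\cdots+k_{t}=K}} A_{t}\bigl(P_{k_{1}}(x),\dots,P_{k_{t}}(x)\bigr).
\end{equation*}
By the previous step $R_{K}\in \mc P(\prescript{K}{}{X};X)$. The extreme values of $K$ are $t\ell$ (taking all $k_{i}=\ell$) and $tm$ (taking all $k_{i}=m$), while for $K$ outside $[t\ell,tm]$ the index set is empty. Therefore
\begin{equation*}
Q_{t}\circ P \;=\; \sum_{K=t\ell}^{tm} R_{K} \;\in\; \bigoplus_{K=t\ell}^{tm}\mc P(\prescript{K}{}{X};X),
\end{equation*}
which is exactly the claimed decomposition.

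I do not expect any genuine obstacle here; the argument is purely bookkeeping on the multilinear representation. The one small point to be careful about is that the summand $A_{t}(P_{k_{1}},\dots,P_{k_{t}})$ is the diagonal evaluation of an \emph{auxiliary} $(k_{1}+\cdots+k_{t})$-linear map built by plugging the $B_{k_{i}}$'s into the slots of $A_{t}$; once that identification is made explicit (as in $C_{k_{1},\dots,k_{t}}$ above), the conclusion follows directly from the definition \eqref{eq:polynomial-m-linear-map} of a homogeneous polynomial.
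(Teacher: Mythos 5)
Your proof is correct and follows essentially the same strategy as the paper's: expand $Q_t(P(x))$ by $t$-multilinearity of the underlying multilinear map, observe that each resulting term is a homogeneous polynomial of degree $k_1+\cdots+k_t\in[t\ell,tm]$, and collect by degree. The only stylistic difference is that you build the composite multilinear map $C_{k_1,\dots,k_t}$ abstractly and take its diagonal, whereas the paper carries out the same bookkeeping in coordinates via the canonical representation \eqref{eq:canonical-m-linear-map}.
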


\begin{corollary}\label{cor:composition-homogeneous-polynomials}
For any $t, m \geq 2$, $\Phi_{t} = I + Q_{t}$, and $P = \sum_{k=\ell}^{m} P_{k}(x)$, where $P_k \in \mc P(\prescript{k}{}{X};X)$ and $Q_{t} \in  \mc P(\prescript{t}{}{X};X)$, then $\Phi_{t} \circ P \in \bigoplus_{k=\ell}^{tm} \mc P(\prescript{k}{}{X};X)$.
\end{corollary}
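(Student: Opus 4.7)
The plan is to reduce this immediately to Lemma~\ref{lem:composition-homogeneous-polynomials} by exploiting the additive structure $\Phi_{t} = I + Q_{t}$. Writing out the composition,
\[
(\Phi_{t} \circ P)(x) \;=\; (I + Q_{t})(P(x)) \;=\; P(x) \;+\; (Q_{t} \circ P)(x), \qquad x \in X,
\]
splits the problem into two pieces whose degree ranges are either already known by hypothesis or can be read off directly from the lemma.

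First I would invoke the assumption $P = \sum_{k=\ell}^{m} P_{k}$ to place the first summand in $\bigoplus_{k=\ell}^{m} \mc P(\prescript{k}{}{X};X)$. Next I would apply Lemma~\ref{lem:composition-homogeneous-polynomials} to the pair $(Q_{t}, P)$, which delivers $Q_{t} \circ P \in \bigoplus_{k=t\ell}^{tm} \mc P(\prescript{k}{}{X};X)$. Finally I would verify the containments $[\ell, m] \subseteq [\ell, tm]$ and $[t\ell, tm] \subseteq [\ell, tm]$, which hold since $t, m \geq 2$ and $\ell \geq 0$ force $\ell \leq m \leq tm$ and $\ell \leq t\ell \leq tm$. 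Combining the two inclusions yields $\Phi_{t} \circ P \in \bigoplus_{k=\ell}^{tm} \mc P(\prescript{k}{}{X};X)$, as desired.

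There is essentially no obstacle here: once Lemma~\ref{lem:composition-homogeneous-polynomials} is in hand the proof is a short bookkeeping exercise. The only mildly delicate point is that when $t\ell > m$ the two degree ranges $[\ell, m]$ and $[t\ell, tm]$ are disjoint, so $\Phi_{t} \circ P$ has a gap of degrees in which every homogeneous component vanishes. This is harmless, because the direct-sum notation in the conclusion permits (and in this situation requires) the intermediate components to be zero.
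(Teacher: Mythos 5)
Your proof is correct and is exactly the natural reduction to Lemma~\ref{lem:composition-homogeneous-polynomials} that the paper intends; in fact, the appendix only writes out the lemma's proof, so the corollary is left as the immediate consequence you describe. The decomposition $\Phi_t \circ P = P + Q_t\circ P$, the degree count $[\ell,m]\cup[t\ell,tm]\subseteq[\ell,tm]$, and the observation that a gap of vanishing intermediate components is permitted by the direct-sum notation are all as expected.
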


Every continuous $m$-homogeneous $P$ is differentiable. Denote the differential of $P$ at $a\in X$ by $P'(a)$; the differential satisfies \cite{Mujica:1986uf}
	\begin{equation}\label{eq:polynomial-differential}
	P'(a)x = m A^s(a^{m-1},x) \qquad (x\in X; m \geq 1)
	\end{equation}
where $A^s$ is a symmetric $m$-linear map defining $P$. Combining \eqref{eq:polynomial-differential} and \eqref{eq:symmetrized-m-linear-form} and listing all the permutations $\sigma$ gives 
	\begin{equation}\label{eq:polynomial-derivative}
	P'(a)x = A(x,a^{m-1}) + A(a,x,a^{m-2}) + \cdots + A(a^{m-1},x).
	\end{equation}
A function $f : X \to Y$ is a power series around $a\in X$ if
	\begin{equation*}
	f(x) = \sum_{m=0}^{\infty} P_{m}(x-a) \qquad (P_{m} \in \mc P_a(\prescript{m}{}{X}; Y) ).
	\end{equation*}

\subsection{Principle eigenfunctions of the Koopman operator}\label{subsec:principle-eigenfunctions}
From this point on, we assume that $T: X \to X$ is a diffeomorphism with a power series around its fixed point $x_0 = 0$, and whose differential at 0, $T'(0)$, is diagonalizable with nonzero eigenvalues. In what follows, we consider a \emph{complex} Banach space with dimension $\dim X = n < \infty$ to prove the results in full generality.  Let $C(X) = C_{\C}(X)$ be the space of complex continuous functions on $X$ and define the Koopman operator $U_{T} : C(X) \to C(X)$ associated with $T$ as
	\begin{equation}
	(U_{T} f)(x) = (f \circ T)(x) \qquad (x\in X).
	\end{equation}
We begin our investigation by defining the principle eigenfunctions of a Koopman operator.

\begin{definition}
Denote the eigenvectors of $T'(0)$ as $\set{ e_1, \dots, e_n } \subset X$, where $T'(0)e_i = \lam_i e_i$, and let $\set{ \phi_1, \dots, \phi_n } \subset X^*$ be the associated coordinate functionals. Each $\phi_i$, for $i=1,\dots, n$, is said to be a principle eigenfunction of $U_{T'(0)}$ associated with $\lam_i$.
\end{definition}

It is easy to show that each coordinate functional $\phi_i$ is a true eigenfunction of $U_{T'(0)}$. Indeed, for all $x \in X$
	\begin{align*}
	\inner{x}{U_{T'(0)}\phi_i} 
	= \inner{T'(0)x}{\phi_i} 
	= \sum_{j=1}^{n} \lam_j\inner{x}{\phi_{j}}\inner{e_j}{\phi_i} = \lam_i \inner{x}{\phi_i} = \inner{x}{\lam_i \phi_i}.
	\end{align*}

\begin{definition}
Let $\set{\phi_1, \dots, \phi_n}$ be the principle eigenfunctions of $U_{T'(0)}$. Define $\mc A_{T'(0)} \subset C(X)$ to be the algebra generated by $\set{\phi_1, \dots, \phi_n}$. $\mc A_{T'(0)}$ can be made into a unital algebra by appending the constant functions to it, which are eigenfunctions at 1. We call $\mc A_{T'(0)}$ the principle algebra for $U_{T'(0)}$.
\end{definition}

\begin{proposition}
The elements of the principle algebra $\mc A_{T'(0)}$ are linear combinations of eigenfunctions of $U_{T'(0)}$.
\end{proposition}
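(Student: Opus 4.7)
The plan is to reduce the claim to the elementary observation that the Koopman operator $U_{T'(0)}$ is multiplicative on $C(X)$ and hence that products of its eigenfunctions are again eigenfunctions, with eigenvalues multiplying. Since the principle algebra $\mc A_{T'(0)}$ is, by construction, generated (as an algebra) by the coordinate functionals $\phi_1, \dots, \phi_n$, every element is a finite linear combination of monomials of the form $\phi_1^{\alpha_1}\cdots\phi_n^{\alpha_n}$ with $\alpha = (\alpha_1, \dots, \alpha_n) \in \N_0^n$. So it suffices to show that each such monomial is an eigenfunction of $U_{T'(0)}$.

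First, I would establish the multiplicativity of the Koopman operator: for $f, g \in C(X)$ and $x \in X$,
\begin{equation*}
U_{T'(0)}(fg)(x) = (fg)(T'(0)x) = f(T'(0)x) g(T'(0)x) = (U_{T'(0)}f)(x)\, (U_{T'(0)}g)(x).
\end{equation*}
Combined with the fact, already verified in the excerpt, that $U_{T'(0)}\phi_i = \lam_i \phi_i$, a straightforward induction on $\abs{\alpha} = \alpha_1 + \cdots + \alpha_n$ then yields
\begin{equation*}
U_{T'(0)}\bigl(\phi_1^{\alpha_1}\cdots\phi_n^{\alpha_n}\bigr) = \lam_1^{\alpha_1}\cdots\lam_n^{\alpha_n}\, \phi_1^{\alpha_1}\cdots\phi_n^{\alpha_n},
\end{equation*}
so each monomial is an eigenfunction of $U_{T'(0)}$ with eigenvalue $\lam^{\alpha} := \lam_1^{\alpha_1}\cdots\lam_n^{\alpha_n}$.

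Finally, since $\mc A_{T'(0)}$ is the algebra generated by $\set{\phi_1, \dots, \phi_n}$ (optionally unital after appending the constants, which are eigenfunctions at the eigenvalue $1$), any $f \in \mc A_{T'(0)}$ can be written as a finite sum $f = \sum_{\alpha} c_\alpha \phi_1^{\alpha_1} \cdots \phi_n^{\alpha_n}$ with $c_\alpha \in \C$, exhibiting $f$ as a linear combination of eigenfunctions of $U_{T'(0)}$.

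There is no real obstacle here; the proof is essentially bookkeeping built on the multiplicativity of composition. The only mild point to be careful about is ensuring that the trivial monomial (the constant $1$) is treated as the eigenfunction at eigenvalue $1$ in the unital case, and otherwise simply noting that all relevant monomials have $\abs{\alpha} \geq 1$, so the induction begins at the base case already supplied by the coordinate functionals themselves.
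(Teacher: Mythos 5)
Your proof is correct and takes essentially the same approach as the paper: both rely on the multiplicativity of the Koopman operator (the paper cites this fact from Budi\v{s}i\'c et al., while you derive it directly), conclude that products of the coordinate functionals are eigenfunctions with multiplied eigenvalues, and then observe that every element of $\mc A_{T'(0)}$ is a finite linear combination of such monomials plus possibly a constant. Your version is slightly more explicit about the induction and the multiplicativity step, but the substance is identical.
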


\begin{proof}
If $U_{T'(0)} \phi_1 = \lam_1 \phi_1$ and $U_{T'(0)} \phi_2 = \lam_2 \phi_2$, then $U_{T'(0)} (\phi_1\cdot\phi_2) = (\lam_1 \lam_2)(\phi_1\cdot\phi_2)$ where ``$\cdot$'' means pointwise multiplication of the functions \cite{Budisic:2012cf}.
Each member of $\mc A_{T'(0)}$ is of the form
	\begin{equation*}
	f = \sum_{j=0}^{m} \sum_{\abs{\alpha}=j} c_{j,\alpha} \phi_1^{\alpha_1}\cdots\phi_n^{\alpha_n}, \qquad (m \in \N_0, \alpha \in \N_0^{n}, c_{j,\alpha} \in \C).
	\end{equation*}
Each term $\phi_1^{\alpha_1}\cdots\phi_n^{\alpha_n}$ with $\abs{\alpha}>0$ is an eigenfunction of $U_{T'(0)}$. Additionalyy, since the constant functions are eigenfunctions of $U_{T'(0)}$ at 1, this concludes the proof.
\end{proof}

\begin{corollary}\label{cor:generated-algebra-polynomials}
$\mc A_{T'(0)} = \mc P(X;\C)$, the space of $\C$-valued polynomials on $X$.
\end{corollary}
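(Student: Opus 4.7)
The plan is to establish the two set-theoretic inclusions $\mc A_{T'(0)} \subseteq \mc P(X;\C)$ and $\mc P(X;\C) \subseteq \mc A_{T'(0)}$ and conclude equality. Both directions are essentially bookkeeping with the canonical representation of polynomials from section \ref{subsec:polynomials}, applied in the specific basis $\set{e_1,\dots,e_n}$ of eigenvectors of $T'(0)$ whose dual functionals are the principle eigenfunctions $\set{\phi_1,\dots,\phi_n}$.

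For the inclusion $\mc A_{T'(0)} \subseteq \mc P(X;\C)$, I would note that each $\phi_i \in X^*$ is a continuous linear functional, hence a $1$-homogeneous continuous polynomial and a member of $\mc P(\prescript{1}{}{X};\C)$. Because $\mc P(X;\C)$ is closed under pointwise multiplication (the product of a $j$-homogeneous and a $k$-homogeneous polynomial lies in $\mc P(\prescript{j+k}{}{X};\C)$ via the associated multilinear forms) and contains the constant functions as its $0$-homogeneous summand, it contains the unital algebra generated by $\set{\phi_1,\dots,\phi_n}$, which is $\mc A_{T'(0)}$.

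For the reverse inclusion $\mc P(X;\C) \subseteq \mc A_{T'(0)}$, I would invoke the canonical representation \eqref{eq:canonical-polynomial-map} specialized to $Y = \C$ and to the coordinate functionals $\set{\phi_i}$ dual to the eigenbasis. It asserts that every $P \in \mc P_a(\prescript{m}{}{X};\C)$ admits the expression

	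\begin{equation*}
	P(x) = \sum_{\abs{\mbf\alpha}=m} c_{\mbf\alpha}\, \phi_1^{\alpha_1}(x) \cdots \phi_n^{\alpha_n}(x), \qquad c_{\mbf\alpha} \in \C,
	\end{equation*}

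so each $m$-homogeneous polynomial is a $\C$-linear combination of monomials in the $\phi_i$'s and thus lies in $\mc A_{T'(0)}$. Since $\mc P(X;\C) = \bigoplus_{m\in\N_0} \mc P(\prescript{m}{}{X};\C)$ by definition, with the $m=0$ summand contributing precisely the constants that were appended to make $\mc A_{T'(0)}$ unital, every element of $\mc P(X;\C)$ is a finite sum of such monomials plus a constant, hence lies in $\mc A_{T'(0)}$.

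There is no real obstacle to overcome here: because $X$ is finite dimensional, algebraic polynomials automatically coincide with continuous polynomials, so there is no subtlety in passing between $\mc P_a$ and $\mc P$, and the corollary is essentially a direct unpacking of the canonical form already proved in the preliminaries, together with the observation that the principle eigenfunctions were defined to be exactly the coordinate functionals dual to a basis of $X$.
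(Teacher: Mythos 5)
Your proof is correct and uses the same key ingredient as the paper, namely the canonical representation \eqref{eq:canonical-polynomial-map} with $Y = \C$ and the fact that $\set{\phi_1,\dots,\phi_n}$ is a basis for $X^*$; you merely spell out the two inclusions explicitly where the paper states the conclusion as immediate.
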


\begin{proof}
This follows immediately from the canonical representation of polynomials \eqref{eq:canonical-polynomial-map} with $Y$ taken to be $\C$ and that $\set{\phi_1,\dots, \phi_n }$ is a basis for $X^*$.
\end{proof}

These two results answer question \eqref{question:1} from the introduction. 

\begin{remark}
If $\mc A_{T'(0)}$ separates points (which is always true since it is built using a basis $\phi_1,\dots, \phi_n$) for the dual and is closed under complex conjugation (which is always true if $X$ is real and we look at the complexification of $T'(0)$), then, if we restrict the domain of the algebra to be a compact neighborhood $K$ of the origin, the Stone-Weierstrass theorem gives that the uniform closure of the algebra is all the continuous functions on $K$ or the sub-algebra that vanishes at the fixed point (see sec.\ \ref{sec:uniform-closure-of-A}). Now, if $\tau : K \to \tau(L)$ is \emph{any} topological conjugacy from the linear dynamics to the nonlinear dynamics ($T = \tau \circ T'(0) \circ \tau^{-1}$), then the pullback algebra of $\mc A_{T'(0)}$, defined as $\mc A_{T} = \mc A_{T'(0)} \circ \tau^{-1}$, inherits the uniform closure properties of $\mc A_{T'(0)}$. This follows since $K$ and $\tau(K)$ are homeomorphic via $\tau$ (see sec.\ \ref{sec:uniform-closure-of-A}).
\end{remark}

We now define the non-resonance conditions that are fundamental to the existence of a topological conjugacy.

\begin{definition}\label{def:non-resonant-principle-eigenfunctions}
Let $\set{\phi_1, \dots, \phi_n}$ be the principle eigenfunctions of $U_{T'(0)}$ associated with $\set{\lam_1,\dots,\lam_n}$. We say that the principle eigenfunctions are non-resonant up to order $K \in \N$ if for all $\alpha = (\alpha_1,\dots, \alpha_n) \in \N_0^n$, with $2 \leq \abs{\alpha} \leq K$,\footnote{For $\alpha = (\alpha_1,\dots, \alpha_n) \in \N_{0}^{n}$, we define $\abs{\alpha} = \abs{\alpha_1} + \cdots + \abs{\alpha_n}$. } and $j \in \set{1,\dots, n}$, it is true that $\lam_j \neq \lam_{1}^{\alpha_1} \cdots \lam_n^{\alpha_n}$. If the principle eigenfunctions are non-resonant up to order $K$ for all $K \geq 2$, then we just say that they are non-resonant.
\end{definition}

\begin{definition}
We say that the principle algebra $\mc A_{T'(0)}$ is non-resonant if the principle eigenfunctions generating it are non-resonant.
\end{definition}


\section{Relating the principle algebra to a sub-algebra of observables for the nonlinear dynamics}\label{sec:linear-nonlinear-observables}
Since $T$ is assumed to have a power series around its fixed point, we write it as
	\begin{equation*}
	T(x) = T'(0)x + P_2(x) + \cdots + P_d(x) + \cdots
	\end{equation*}
where $P_m \in \mc P(\prescript{m}{}{X}; X)$.

\subsection{Normal form computation using the principle algebra}
The construction of the topological conjugacy goes through a sequence of invertible polynomial transformations $x = \Phi_m(y) = (I_{X} + Q_m)(y)$, for $m\geq 2$ and where $I_X$ is the identity operator on $X$, $Q_m \in \mc P(\prescript{m}{}{X}; X)$, is a homogeneneous polynomial, and $x,y \in X$. To generate the topological conjugacy, we inductively eliminate the lowest order polynomial of degree 2 or greater remaining in the series development of $T$ through a sequence of polynomial change of variables $\Phi_{m} : X \to X$.

Let $T_1 = T$ and for $m\geq 2$, define $T_{m} : X \to X$ as
	\begin{equation}
	T_{m}(x) = (\Phi_{m}^{-1} \circ T_{m-1} \circ \Phi_{m})(x) \qquad (x \in X).
	\end{equation}
where $\Phi_{m}$ is to be chosen later as in the proof of proposition \ref{prop:normal-form-induction} given in appendix \ref{app:normal-form-induction-proof}.

To compute $T_{m}$, we need to compute $\Phi_m^{-1} : X \to X$. The inverse function theorem for Banach spaces guarantees that the inverse exists for all $x$ close enough to the fixed point. Computing the inverse can be accomplished with a standard fixed point algorithm. In what follows, we denote the open unit-ball and the open ball of radius $r > 0$ about the origin as $B = \set{ x \in X \given \norm{x} < 1}$ and $B_r = \set{ x \in X \given \norm{x} < r}$, respectively. The next lemma follows from the inverse function theorem in Banach spaces \cite[][ch. 2]{Aubin:1984tc}. In the construction of the domains of the change of variables $\Phi_{m}$, we can always choose the domain to be contained strictly inside the unit ball.

\begin{lemma}\label{lem:compute-inverse}
For any $m\geq 2$, let $Q_{m} \in \mc P(\prescript{m}{}{X}; X)$ and $\Phi_{m} = I_{X} + Q_m$. There exists $\eps_m > 0$, satisfying $0 < \eps_m < 1$, such that $\Phi_{m}$ has a $C^1$-inverse $\Phi_m^{-1} : \Phi_m(B_{\eps_m}) \to B_{\eps_m}$. For $y\in \Phi_{m}(B_{\eps_m})$, the inverse $x = \Phi_m^{-1}(y)$ can be computed as the limit $x = \lim_{n\to\infty} x_n$ where
	\begin{equation}\label{eq:inverse-iterative-procedure}
	\begin{aligned}
	x_0 &= 0 \\
	x_{n+1} &= G_y(x_n) = y - Q_{m}(x_n) \qquad (n\geq 0).
	\end{aligned}
	\end{equation}
\end{lemma}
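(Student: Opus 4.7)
The plan is to combine a direct application of the inverse function theorem with a standard contraction argument to justify the iterative scheme. First, observe that since $Q_m$ is $m$-homogeneous with $m \geq 2$, formula \eqref{eq:polynomial-differential} gives $Q_m'(0)=0$, so $\Phi_m'(0)=I_X$ is invertible. The inverse function theorem in Banach spaces then supplies open neighborhoods of $0$ on which $\Phi_m$ is a $C^1$-diffeomorphism; shrinking to a ball we may take the domain to be $B_{\epsilon_m}$ for some $\epsilon_m \in (0,1)$. This handles the existence and $C^1$ regularity of $\Phi_m^{-1} : \Phi_m(B_{\epsilon_m}) \to B_{\epsilon_m}$, independently of the iterative formula.

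Next I would recast the equation $\Phi_m(x)=y$ as the fixed-point problem $x = G_y(x) := y - Q_m(x)$ and verify the hypotheses of the Banach fixed point theorem. Two standard polynomial estimates do the work: $\|Q_m(x)\| \leq \|Q_m\|\,\|x\|^m$, and, using the derivative formula \eqref{eq:polynomial-derivative} together with the mean value inequality, $\|Q_m(x)-Q_m(x')\| \leq m\|Q_m\|\,R^{m-1}\|x-x'\|$ for all $x,x' \in \bar B_R$. Choosing $R \in (0,1)$ small enough that $m\|Q_m\|\,R^{m-1} \leq \tfrac{1}{2}$ makes $G_y$ a $\tfrac{1}{2}$-contraction on $\bar B_R$ uniformly in $y$.

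For $G_y$ to map $\bar B_R$ into itself, I would shrink $\epsilon_m$ further so that every $y \in \Phi_m(B_{\epsilon_m})$ satisfies $\|y\| \leq R/2$; this is automatic since $\|y\| \leq \|x^*\| + \|Q_m\|\,\|x^*\|^m$ whenever $y=\Phi_m(x^*)$, once $\epsilon_m$ is chosen small. Then for $x \in \bar B_R$,
\begin{equation*}
\|G_y(x)\| \leq \|y\| + \|Q_m\|\,R^m \leq R/2 + R/2 = R.
\end{equation*}
Banach's fixed point theorem now yields a unique fixed point $x^{\dagger} \in \bar B_R$, reached as the limit of the iterates $x_{n+1}=G_y(x_n)$ from any starting point, and in particular from $x_0=0$. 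Since $\Phi_m$ is injective on $B_{\epsilon_m}\subset \bar B_R$, the fixed point must coincide with the unique preimage $\Phi_m^{-1}(y) \in B_{\epsilon_m}$, confirming \eqref{eq:inverse-iterative-procedure}.

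I do not anticipate any real obstacle in this argument; the only point requiring care is to choose the two radii in the correct order, namely first picking $R$ to secure the contraction and self-mapping estimates on $\bar B_R$, and then selecting $\epsilon_m \leq R$ small enough that $\|\Phi_m(x^*)\| \leq R/2$ holds for every $x^* \in B_{\epsilon_m}$.
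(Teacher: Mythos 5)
Your proof takes essentially the same two-step route as the paper: first invoke the inverse function theorem (using $Q_m'(0)=0$ so that $\Phi_m'(0)=I_X$) to obtain a local $C^1$-inverse, then recast $\Phi_m(x)=y$ as the fixed-point equation $x = G_y(x) = y - Q_m(x)$ and apply the contraction mapping principle, with the Lipschitz constant controlled by the derivative bound on a small ball.

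One point where you are actually \emph{more} careful than the paper: you explicitly verify that $G_y$ maps $\bar B_R$ into itself (by shrinking $\eps_m$ so $\|y\|\leq R/2$), which is needed for the iterates starting from $x_0=0$ to remain in the region of contraction. The paper states the Lipschitz bound on $B_{\eps_m}$ and then directly invokes the contraction mapping principle without checking this invariance. A minor technical slip on your side: the derivative estimate \eqref{eq:polynomial-derivative} gives $\|Q_m'(a)\|\leq m\|A^s\|\,\|a\|^{m-1}$ where $A^s$ is the symmetric $m$-linear form defining $Q_m$, not $m\|Q_m\|\,\|a\|^{m-1}$; since in general $\|Q_m\|\leq\|A^s\|\leq \frac{m^m}{m!}\|Q_m\|$, your Lipschitz constant should use $\|A^s\|$ (or an extra combinatorial factor). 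This does not affect the structure of the argument --- replace $\|Q_m\|$ by $\|A^s\|$ throughout the contraction and self-mapping estimates and everything goes through.
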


\begin{proof}%

[\emph{Existence}] Note that $\Phi_{m}(0) = 0$ and $\Phi_{m}'(0) = I_X + Q_m'(0)$. Since $m \geq 2$, then $Q_m'(0) = 0$ by \eqref{eq:polynomial-differential}. Therefore $(\Phi_{m}'(0))^{-1} = I_X$. Then \cite[see][ch. 7]{Bollobas:1999ve}, $\Phi_m$ is a homeomorphism of an open neighborhood $U_0$ of $0$ onto an open neighborhood $V_0$ of $\Phi_m(0)$ such that $\Phi_m^{-1} : V_0 \to U_0$ is continuously differentiable with $(\Phi_{m}^{-1})'(y) = (\Phi_{m}'(\Phi_{m}^{-1}(y)))^{-1}$. 

[\emph{Construction}]
Fix $0 < \beta < 1$ and let $A \in \mc L(\prescript{m}{}{X};X)$ be an $m$-linear form defining $Q_{m}$. Let $G_y : X\to X$ be defined as $G_{y}(x) = y - Q_{m}(x)$.  Define $\eps_m > 0$ as a number satisfying $\eps_m < \min\left\{1,  \big(\frac{\beta}{m\norm{A}}\big)^{1/(m-1)} \right\}$ and $B_{\eps_m} \subset U_0$. Then, by \eqref{eq:polynomial-derivative}, for any $a \in B_{\eps_m}$
	\begin{equation*}
	\norm{G_{y}'(a)} = \norm{Q_{m}'(a)} \leq m \norm{A} \norm{a}^{m-1} \leq \beta < 1.
	\end{equation*}
Since $B_{\eps_m}$ is a convex open set, then $\norm{G_{y}(x) - G_{y}(z)} \leq \beta \norm{x - z}$ for all $x,z \in B_{\eps_m}$ \cite[][Ch.\ 7, Lemma 5]{Bollobas:1999ve}. By the contraction mapping principle, $G_y$ has a unique fixed point $x^* = \lim_{n\to\infty} G^{n}(0)$. That is, $x^* = G(x^*) = y - Q_{m}(x^*)$ which is equivalent to $y = x^{*} + Q_{m}(x^{*}) = \Phi_{m}(x^*)$. Therefore, $x^{*} = \Phi_{m}^{-1}(y)$.
\end{proof}

\begin{remark}
In the construction, we have chosen $\eps_{m}$ such that it is strictly less than 1; i.e., that $B_{\eps_m} \subset B_{1}$.
\end{remark}

\begin{corollary}\label{cor:asymptotic-inverse}
We have that $\Phi_m^{-1}(y) = y - Q_m(y) + e_{m}(y)$, where $e_{m}(y) \in \mc{O}(\norm{y}^{2m-1})$ as $y \to 0$.
\end{corollary}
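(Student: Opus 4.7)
The plan is to exploit the fixed-point equation $\Phi_m^{-1}(y) = y - Q_m(\Phi_m^{-1}(y))$ directly, rather than iterating the scheme of Lemma \ref{lem:compute-inverse} by hand. Writing $\xi := \Phi_m^{-1}(y)$, the identity $\xi = y - Q_m(\xi)$ already encodes the entire expansion; the task is to substitute an asymptotic approximation of $\xi$ into $Q_m(\xi)$ and expand using the Newton binomial formula \eqref{eq:binomial-formula}.

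First I would establish the coarse bound $\xi = y + \mc O(\norm{y}^{m})$ as $y \to 0$. Starting from $\norm{\xi - y} = \norm{Q_m(\xi)} \leq \norm{Q_m}\,\norm{\xi}^{m}$ and $\norm{\xi} \leq \norm{y} + \norm{Q_m}\,\norm{\xi}^{m}$, for $y$ sufficiently small (shrinking $\eps_{m}$ if necessary so that $\norm{Q_m}\,\norm{\xi}^{m-1} \leq 1/2$, which is compatible with the contraction estimate in the proof of Lemma \ref{lem:compute-inverse}) one obtains $\norm{\xi} \leq 2\norm{y}$, and consequently $\norm{\xi - y} \leq 2^{m}\norm{Q_m}\,\norm{y}^{m}$.

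Next I would write $\xi = y + w$ with $w := -Q_m(\xi)$ satisfying $\norm{w} \leq 2^{m}\norm{Q_m}\,\norm{y}^{m}$. Letting $A^{s}$ denote the symmetric $m$-linear map associated with $Q_m$, formula \eqref{eq:binomial-formula} gives
\begin{equation*}
Q_m(\xi) \;=\; A^{s}((y+w)^{m}) \;=\; Q_m(y) \;+\; m\,A^{s}(y^{m-1},w) \;+\; \sum_{j=2}^{m} \binom{m}{j}\, A^{s}(y^{m-j},w^{j}).
\end{equation*}
Using $\norm{A^{s}(x_{1},\dots,x_{m})} \leq \norm{A^{s}}\,\norm{x_{1}}\cdots\norm{x_{m}}$, the $j=1$ term is $\mc O(\norm{y}^{m-1}\norm{w}) = \mc O(\norm{y}^{2m-1})$, while each $j\geq 2$ term is $\mc O(\norm{y}^{m-j}\norm{w}^{j}) = \mc O(\norm{y}^{m + j(m-1)})$, which is $\mc O(\norm{y}^{3m-2})$ and therefore absorbed into $\mc O(\norm{y}^{2m-1})$ since $m \geq 2$.

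Substituting back into $\xi = y - Q_m(\xi)$ yields $\Phi_m^{-1}(y) = y - Q_m(y) + e_{m}(y)$ with $e_{m}(y) = \mc O(\norm{y}^{2m-1})$ as $y \to 0$, as claimed. The only real obstacle is bookkeeping: ensuring that the crude inequality $\norm{\xi} \leq 2\norm{y}$ holds uniformly in $y$ on a neighborhood of the origin so that the binomial remainders can be controlled in terms of $\norm{y}$ alone; this is exactly what the first step provides.
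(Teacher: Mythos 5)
Your argument is correct and structurally identical to the paper's: both exploit the fixed-point identity $\Phi_m^{-1}(y) = y - Q_m(\Phi_m^{-1}(y))$, expand $Q_m$ around $y$ using the binomial formula \eqref{eq:binomial-formula}, and control the remainder via a coarse linear bound $\norm{\Phi_m^{-1}(y)} = \mc{O}(\norm{y})$. The only difference is how that coarse bound is obtained: the paper appeals to the $C^1$ property of $\Phi_m^{-1}$ and the resulting Lipschitz constant on a convex neighborhood, whereas you derive $\norm{\xi} \leq 2\norm{y}$ directly from $\norm{\xi} \leq \norm{y} + \norm{Q_m}\norm{\xi}^{m-1}\norm{\xi}$ and the contraction condition already ensured by the choice of $\eps_m$ in Lemma \ref{lem:compute-inverse} --- a slightly more elementary and self-contained route to the same estimate.
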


\begin{proof}
The solution $x = \Phi_{m}^{-1}(y)$ is a fixed point of $G(x) = y - Q_{m}(x)$; 
	\begin{equation*}
	\Phi^{-1}(y) = x = y - Q_{m}(x) = y - Q_{m}(y - Q_{m}(x)).
	\end{equation*}
Letting $A^s$ be the symmetric $m$-linear form defining $Q_{m}$ and using the Binomial formula
	\begin{align*}
	Q_{m}(y - Q_{m}(x)) = A^s( (y - Q_{m}(x))^m ) = Q_{m}(y) + \sum_{j=1}^{m} \binom{m}{j} (-1)^j A^s( y^{m-j}, (Q_{m}(x))^j).
	\end{align*}
Then
	\begin{align*}
	\norm{Q_{m}(y - Q_{m}(x)) - Q_{m}(y)} &\leq \sum_{j=1}^{m} \binom{m}{j}  \norm{A^s} \norm{y}^{m-j} \norm{Q_{m}(x)}^j \\
	&\leq \sum_{j=1}^{m} \binom{m}{j}  \norm{A^s} \norm{Q_{m}}^j \norm{y}^{m-j} \norm{x}^{mj} \\
	&\leq \sum_{j=1}^{m} \binom{m}{j}  \norm{A^s} \norm{Q_{m}}^j \norm{y}^{m-j} \norm{\Phi^{-1}(y)}^{mj}.
	\end{align*}
Let $U$ be a convex open neighborhood of 0 such that $\overline{U} \subset \Phi_{m}(B_{\eps_m})$. Since $\Phi_{m}^{-1} : \Phi_{m}(B_{\eps_m}) \to B_{\eps_m}$ is $C^1$ on $\Phi_{m}(B_{\eps_m})$, then $\norm{(\Phi_{m}^{-1})'(\cdot)}$ is uniformly bounded by some constant $L > 0 $ on $U$. Then $\Phi_{m}^{-1}$ is Lipschitz with constant $L$ on $U$ \cite[][Ch.\ 7, Lemma 5]{Bollobas:1999ve}. Since $\Phi_{m}^{-1}(0) = 0$, then $\norm{\Phi^{-1}(y)} \leq L \norm{y}$ for all $y \in U$. Therefore,
	\begin{equation*}
	\norm{Q_{m}(y - Q_{m}(x)) - Q_{m}(y)} \leq \sum_{j=1}^{m} \binom{m}{j}  \norm{A^s} \norm{Q_{m}}^j L^{mj} \norm{y}^{m + (m-1)j} \qquad (y \in U).
	\end{equation*}
The right-hand side is $\mc{O}(\norm{y}^{2m-1})$ as $y \to 0$.
It follows that $Q_{m}(y - Q_{m}(x)) =  Q_{m}(y) + \mc{O}(\norm{y}^{2m-1})$ as $y \to 0$.
\end{proof}

\begin{remark}
The domain of convergence is related to how large an $x$ can be taken such that the differential $Q_{m}'(x)$ can still be uniformly bounded by a constant $\beta < 1$ in some convex neighborhood of 0.
\end{remark}

\begin{lemma}\label{lem:Lie-bracket}
Assume that the principle eigenfunctions of $U_{T'(0)}$ are non-resonant. For each $m\geq 2$, the linear operator $\mc L_{T'(0)}^{(m)} : \mc P(\prescript{m}{}{X}; X) \to \mc P(\prescript{m}{}{X}; X)$ defined as
	\begin{equation}
	(\mc L_{T'(0)}^{(m)} f)(x) = (U_{T'(0)} f)(x) - T'(0) f(x)
	\end{equation}
has a bounded inverse $(\mc L_{T'(0)}^{(m)})^{-1} : \mc P(\prescript{m}{}{X}; X) \to \mc P(\prescript{m}{}{X}; X)$.
\end{lemma}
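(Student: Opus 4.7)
The plan is to diagonalize $\mc L_{T'(0)}^{(m)}$ explicitly in the canonical monomial basis exhibited in Section \ref{subsec:polynomials}. Recall that $\mc P(\prescript{m}{}{X}; X)$ is spanned by the finite collection $\{p_{i,\alpha} : 1 \le i \le n,\ |\alpha|=m\}$, where $p_{i,\alpha}(x) = (\phi_1^{\alpha_1}(x)\cdots \phi_n^{\alpha_n}(x))\,e_i$. Since $X$ is finite-dimensional, this basis is finite, so $\mc P(\prescript{m}{}{X}; X)$ is finite-dimensional and any linear map on it is automatically bounded; the only real issue is invertibility.

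The main step is a direct computation showing that each $p_{i,\alpha}$ is an eigenvector of $\mc L_{T'(0)}^{(m)}$. Using $\phi_j \circ T'(0) = \lam_j \phi_j$ (already shown after the definition of principle eigenfunctions) and $T'(0) e_i = \lam_i e_i$, I would compute
\begin{align*}
(\mc L_{T'(0)}^{(m)} p_{i,\alpha})(x)
&= p_{i,\alpha}(T'(0)x) - T'(0) p_{i,\alpha}(x) \\
&= \Bigl(\prod_{j=1}^{n} \phi_j^{\alpha_j}(T'(0)x)\Bigr) e_i - \Bigl(\prod_{j=1}^{n} \phi_j^{\alpha_j}(x)\Bigr) T'(0) e_i \\
&= \bigl(\lam_1^{\alpha_1}\cdots \lam_n^{\alpha_n} - \lam_i\bigr)\, p_{i,\alpha}(x).
\end{align*}
So $p_{i,\alpha}$ is an eigenvector of $\mc L_{T'(0)}^{(m)}$ with eigenvalue $\lam^{\alpha} - \lam_i$, where I write $\lam^{\alpha} := \lam_1^{\alpha_1}\cdots\lam_n^{\alpha_n}$.

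Next I invoke the non-resonance hypothesis: since $|\alpha|=m\geq 2$ and Definition \ref{def:non-resonant-principle-eigenfunctions} asserts $\lam_i \neq \lam^{\alpha}$ for all such $(i,\alpha)$, every eigenvalue $\lam^{\alpha}-\lam_i$ is nonzero. Thus $\mc L_{T'(0)}^{(m)}$ is a diagonal operator (in this basis) with nonzero diagonal entries, and an explicit inverse is given by
\begin{equation*}
(\mc L_{T'(0)}^{(m)})^{-1} p_{i,\alpha} = \frac{1}{\lam^{\alpha} - \lam_i}\, p_{i,\alpha}.
\end{equation*}
Finite-dimensionality of $\mc P(\prescript{m}{}{X}; X)$ immediately gives that this inverse is bounded (any norm on it is equivalent to, e.g., the maximum of $|\lam^{\alpha}-\lam_i|^{-1}$ times the coefficient norm). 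There is really no obstacle beyond correctly pushing the eigenfunction relation through products, so I would keep the exposition brief and close by remarking that the argument uses only non-resonance at order $m$, not the full non-resonance hypothesis.
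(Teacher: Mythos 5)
Your proof is correct and follows essentially the same route as the paper: diagonalize $\mc L_{T'(0)}^{(m)}$ on the canonical monomial basis $p_{i,\alpha}$, observe that the eigenvalues are $\lambda^{\alpha}-\lambda_i$, and use non-resonance plus finite-dimensionality to conclude bounded invertibility. Your closing remark that only order-$m$ non-resonance is needed is a small, accurate refinement the paper does not state explicitly.
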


\begin{proof}
$\mc P(\prescript{m}{}{X}; X)$ is finite-dimensional with basis elements of the form
	\begin{equation*}
	p_{j,\mbf \alpha}(x) = e_{j} \prod_{i=1}^{n} \phi_{i}^{\alpha_i}(x), \qquad (j=1,\dots, n;~ \abs{\mbf \alpha} = m).
	\end{equation*}
Then 
	\begin{align*}
	(\mc L_{T'(0)}^{(m)} p_{j,\mbf \alpha})(x) 
	&= U_{T'(0)} \left( e_{j} \prod_{i=1}^{n} \phi_{i}^{\alpha_i}(x) \right) - T'(0) e_{j} \prod_{i=1}^{n} \phi_{i}^{\alpha_i}(x) \\
	&= e_{j} \prod_{i=1}^{n} \phi_{i}^{\alpha_i}(T'(0)x) - \lam_j e_{j} \prod_{i=1}^{n} \phi_{i}^{\alpha_i}(x) \\
	&= e_{j} \prod_{i=1}^{n} U_{T'(0)}\phi_{i}^{\alpha_i}(x) - \lam_j e_{j} \prod_{i=1}^{n} \phi_{i}^{\alpha_i}(x) \\
	&= e_{j} \prod_{i=1}^{n} \lam_{i}^{\alpha_i}\phi_{i}^{\alpha_i}(x) - \lam_j e_{j} \prod_{i=1}^{n} \phi_{i}^{\alpha_i}(x) \\
	&= \left(\prod_{i=1}^{n} \lam_i^{\alpha_i} - \lam_j \right) p_{j,\mbf\alpha}(x).
	\end{align*}
The non-resonance assumption gives that $\mu_{j,\mbf\alpha} = (\prod_{i=1}^{n} \lam_i^{\alpha_i} - \lam_j) \neq 0$ for all $j=1,\dots, n$ and $\abs{\mbf \alpha} = m \geq 2$. Therefore, $\mc L_{T'(0)}^{(m)}$ is a finite-dimensional, diagonalizable linear operator with non-zero eigenvalues. Therefore, its inverse exists.
\end{proof}

\begin{remark}
There is a similarity transformation $V : \C^{n} \to \mc P(\prescript{m}{}{X}; X)$ such that $\mc L_{T'(0)}^{(m)} = V \Lambda V^{-1}$, where $\Lambda = \diag(\set{\mu_{j,\mbf \alpha}})$, and therefore $(\mc L_{T'(0)}^{(m)})^{-1} = V \Lambda^{-1} V^{-1}$. It follows that $\norm{(\mc L_{T'(0)}^{(m)})^{-1}} \leq \kappa(V)/\mu_{\min}$, where $\mu_{\min} = \min\set{\mu_{j,\mbf\alpha}}$ and $\kappa(V)$ is the condition number of $V$.
\end{remark}

\begin{proposition}\label{prop:normal-form-induction}
Assume that the principle eigenfunctions of $U_{T'(0)}$ are non-resonant. Let $T_{1} = T$ and assume that we have for some $m\geq 1$,  a map $T_{m}(x) = T'(0)x + R_{m+1}(x)$, where $R_{m+1}$ is a power series about 0 and $R_{m+1}(x) \in \mc{O}(\norm{x}^{m+1})$ as $x \to 0$. Then there is a  number $\eps_{m+1}$ satisfying $0 < \eps_{m+1} < 1$ and an invertible polynomial map $\Phi_{m+1} : B_{\eps_{m+1}} \to \Phi_{m+1}(B_{\eps_{m+1}})$ defined as $x = \Phi_{m+1}(z) = (I_X + Q_{m+1})(z)$, where $Q_{m+1} \in \mc P(\prescript{m+1}{}{X};X)$, such that
	\begin{equation}
	T_{m+1}(z) = (\Phi_{m+1}^{-1} \circ T_{m} \circ \Phi_{m+1})(z) = T'(0)z + R_{m+2}(z),
	\end{equation}
where $R_{m+2}$ is a power series about 0 satisfying $R_{m+2}(z) \in \mc{O}(\norm{z}^{m+2})$ as $z\to 0$.
\end{proposition}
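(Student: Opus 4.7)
This is the standard inductive step from normal form theory, and the strategy is to reduce the construction of $\Phi_{m+1}$ to the solvability of a cohomological equation on the space $\mathcal P({}^{m+1}X;X)$. First I would split the tail of $T_{m}$ as $R_{m+1}(x) = P_{m+1}(x) + \tilde R_{m+2}(x)$, where $P_{m+1} \in \mathcal P({}^{m+1}X;X)$ is the homogeneous component of degree $m+1$ in the power series of $R_{m+1}$ and $\tilde R_{m+2}(x) \in \mathcal O(\norm{x}^{m+2})$ as $x \to 0$. I would seek $Q_{m+1} \in \mathcal P({}^{m+1}X;X)$ so that after conjugation by $\Phi_{m+1} = I_{X} + Q_{m+1}$, the degree-$(m+1)$ term in the series of $T_{m+1}$ cancels against $P_{m+1}$.

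Next, I would expand $T_{m}\circ\Phi_{m+1}$. Since $Q_{m+1}(z) \in \mathcal O(\norm{z}^{m+1})$, the multilinear expansion of Section~\ref{subsec:polynomials} (Newton's binomial formula \eqref{eq:binomial-formula}) and the estimate $\norm{P(x+h) - P(x) - P'(x)h}$ arising from \eqref{eq:polynomial-derivative} give
$$T_{m}(\Phi_{m+1}(z)) = T'(0)z + T'(0)Q_{m+1}(z) + P_{m+1}(z) + \mathcal O(\norm{z}^{m+2}),$$
where the remainder absorbs $\tilde R_{m+2}(\Phi_{m+1}(z))$, the correction $P_{m+1}(z + Q_{m+1}(z)) - P_{m+1}(z) \in \mathcal O(\norm{z}^{2m+1})$, and all higher-order contributions from the power series of $R_{m+1}$. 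Writing $y = T_{m}(\Phi_{m+1}(z))$ and applying Corollary~\ref{cor:asymptotic-inverse} gives $\Phi_{m+1}^{-1}(y) = y - Q_{m+1}(y) + \mathcal O(\norm{y}^{2m+1})$, and since $y = T'(0)z + \mathcal O(\norm{z}^{m+1})$ another application of the same multilinear expansion yields $Q_{m+1}(y) = (U_{T'(0)}Q_{m+1})(z) + \mathcal O(\norm{z}^{2m+1})$.

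Collecting these expansions, and using that $2m+1 \geq m+2$ whenever $m \geq 1$, I would arrive at
$$T_{m+1}(z) = T'(0)z + P_{m+1}(z) - \bigl(U_{T'(0)}Q_{m+1} - T'(0)Q_{m+1}\bigr)(z) + \mathcal O(\norm{z}^{m+2}).$$
The bracketed operator is exactly $\mathcal L_{T'(0)}^{(m+1)} Q_{m+1}$, so choosing $Q_{m+1} := (\mathcal L_{T'(0)}^{(m+1)})^{-1} P_{m+1}$, which is well-defined by Lemma~\ref{lem:Lie-bracket} together with the non-resonance hypothesis at order $m+1$, kills the degree-$(m+1)$ contribution. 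The residual $R_{m+2}(z)$ is then $\mathcal O(\norm{z}^{m+2})$ and inherits the power series structure from $R_{m+1}$ and the polynomial maps $Q_{m+1}, \Phi_{m+1}^{-1}$.

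To finish, I would invoke Lemma~\ref{lem:compute-inverse} with this specific $Q_{m+1}$ to obtain $\epsilon_{m+1} \in (0,1)$ on which $\Phi_{m+1}: B_{\epsilon_{m+1}} \to \Phi_{m+1}(B_{\epsilon_{m+1}})$ is a $C^{1}$-diffeomorphism, so that all preceding manipulations are valid on $B_{\epsilon_{m+1}}$. The main bookkeeping obstacle is tracking the two distinct error scales that appear: the $\mathcal O(\norm{z}^{m+2})$ remainder produced by truncating the power series of $R_{m+1}$, and the $\mathcal O(\norm{z}^{2m+1})$ remainder produced by the composition with $\Phi_{m+1}$ and $\Phi_{m+1}^{-1}$. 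Verifying that both collapse cleanly into a single $\mathcal O(\norm{z}^{m+2})$ remainder, and that the surviving $(m+1)$-order term is precisely $P_{m+1} - \mathcal L_{T'(0)}^{(m+1)} Q_{m+1}$, is the only place where care is required; everything else is packaged by Lemma~\ref{lem:Lie-bracket} and Corollary~\ref{cor:asymptotic-inverse}.
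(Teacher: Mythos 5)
Your proposal is correct and follows essentially the same route as the paper's appendix proof: expand $\Phi_{m+1}^{-1}\circ T_m\circ\Phi_{m+1}$ via Corollary~\ref{cor:asymptotic-inverse} and the binomial formula, identify the surviving degree-$(m+1)$ operator as $\mathcal L_{T'(0)}^{(m+1)}Q_{m+1}$, solve for $Q_{m+1}$ using Lemma~\ref{lem:Lie-bracket}, and invoke Lemma~\ref{lem:compute-inverse} for $\epsilon_{m+1}$. The only (cosmetic) difference is that you split off the homogeneous degree-$(m+1)$ part of $R_{m+1}$ at the outset, whereas the paper keeps $R_{m+1}$ intact through the expansion and only splits it at the final step; you are also slightly more careful with the error scale (noting $2m+1\ge m+2$ for all $m\ge 1$, where the paper's displayed $\mathcal O(\norm z^{2m})$ bound, taken literally, would be too weak at $m=1$).
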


The techniques used in the proof of proposition \ref{prop:normal-form-induction} are the standard ones used in normal form theory \cite[e.g., see][]{Wiggins:2003tz,Katok:1997th}. It consists of introducing a change of variables map $\Phi_{m}$, expanding the resulting substitutions, and then relying on lemma \ref{lem:Lie-bracket} to guarantee the existence of a polynomial that will eliminate the lowest order nonlinear term. For completeness the proof is given in appendix \ref{app:normal-form-induction-proof}

\section{Approximate topological conjugacies and approximate eigenfunctions}\label{sec:approximate-conjugacies-eigenfunctions}
Using the sequence of transformations, $\Phi_{m} : B_{\eps_{m}} \to \Phi_{m}(B_{\eps_{m}})$, from proposition \ref{prop:normal-form-induction}, we wish to define
	\begin{align}
	\tau_{m} = \Phi_{2} \circ \cdots \circ \Phi_{m},  \qquad (m\geq 2)  .
	\end{align}
We call $\tau_{m}$ the ($m$-th order) approximate conjugacy. Our goal is to define $U_{m} \subset B_{\eps_{m}}$ such that both $\tau_{m}$ and $\tau_{m}^{-1}$ are well-defined. From now on, we denote $B_{\eps_k}$ by $B_{k}$.

Fix $m \geq 2$. Define $\hat U_{m} \subset B_{m}$ by
	\begin{equation}\label{eq:Um-defn}
	\hat U_{m} := \bigcap_{j=1}^{m-2} \set{ z\in B_{m} \given \Phi_{m-j+1} \circ \cdots \circ \Phi_{m}(z) \in B_{m-j} }.
	\end{equation}
Let $\hat V_{m} := \Phi_2 \circ \cdots \circ \Phi_{m}(\hat U_{m})$.
Graphically, \eqref{eq:Um-defn} is equivalent to the diagram given in figure \ref{fig:nested_cd}.


\begin{figure}[h]
\begin{center}
\includegraphics[width=0.8\textwidth]{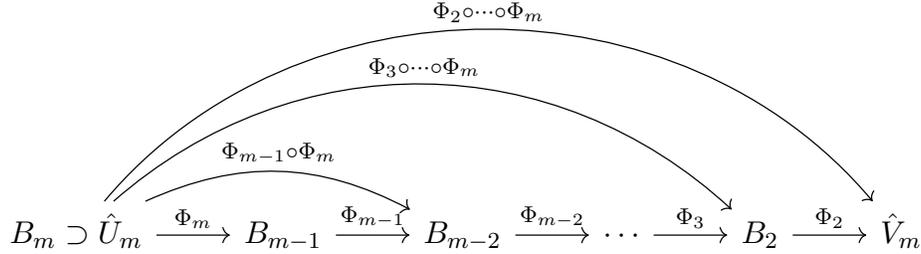}
\caption{Diagram equivalent to \eqref{eq:Um-defn} showing the relationship between the domains $\hat U_{m}$ and the change of variables maps $\Phi_{j}$.}
\label{fig:nested_cd}
\end{center}
\end{figure}

Define $V_{m} \subset \hat V_{m}$ to be a $T$-invariant neighborhood of $0$. Then $\Phi_{m}^{-1} \circ \cdots \circ \Phi_{2}^{-1} : V_{m} \to \hat U_{m}$ is a well-defined map. Finally, define
	\begin{equation*}
	U_{m} = \Phi_{m}^{-1} \circ \cdots \circ \Phi_{2}^{-1}(V_{m}).
	\end{equation*}
The preceding work shows that $\tau_{m} : U_{m} \to V_{m}$ is well-defined with a $C^1$ inverse, since each $\Phi_{k}$ has a $C^1$ inverse by lemma \ref{lem:compute-inverse}. This implies that 
	\begin{equation*}
	T_{m} = \tau_{m}^{-1} \circ T \circ \tau_{m} : U_{m} \to U_{m}
	\end{equation*}
is well-defined and invertible (see figure \ref{fig:tau_m_cd}). 

\begin{figure}[h]
\begin{center}
\includegraphics[width=0.2\textwidth]{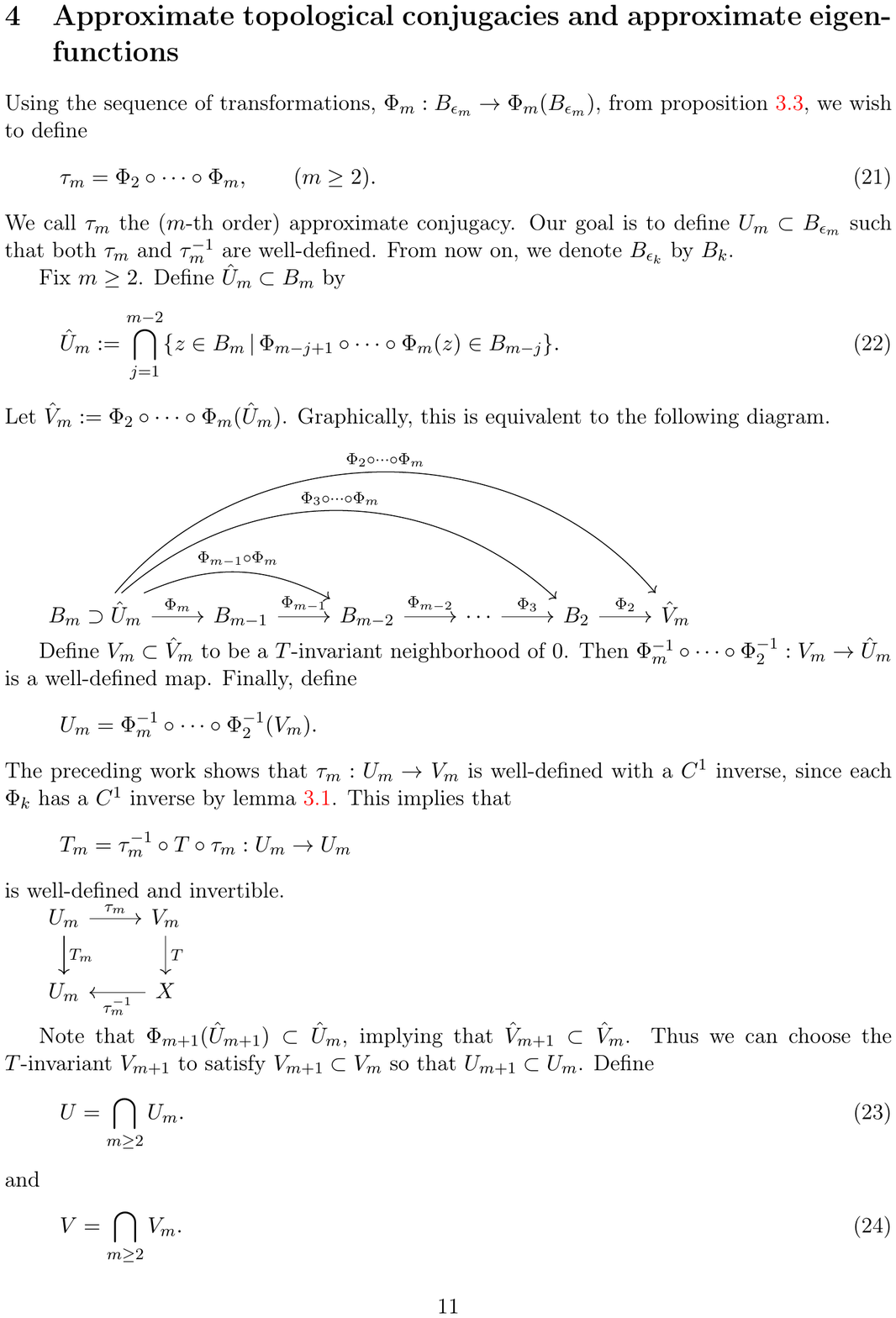}
\caption{Commutative diagram for $T_m$ and $T$.}
\label{fig:tau_m_cd}
\end{center}
\end{figure}

Note that $\Phi_{m+1}(\hat U_{m+1}) \subset \hat U_{m}$, implying that $\hat V_{m+1} \subset \hat V_{m}$. Thus we can choose the $T$-invariant $V_{m+1}$ to satisfy $V_{m+1} \subset V_{m}$ so that $U_{m+1} \subset U_{m}$.
Define 
	\begin{equation}
	U = \bigcap_{m\geq 2} U_{m}.
	\end{equation}
and
	\begin{equation}
	V = \bigcap_{m\geq 2} V_{m}.
	\end{equation}
Since $V_{m}$ is a $T$-invariant neighborhood of 0 for all $m\geq 2$, then $V$ is a $T$-invariant set containing 0. If additionally, $V$ has a nonempty interior, then $U$ is a neighborhood of 0. Then $\tau : U \to V$ defined as
	\begin{equation}\label{eq:limit-conjugacy}
	\tau(z) = \lim_{m\to\infty} \tau_{m}(z)
	\end{equation}
has a well-defined domain and is invertible on $V$. It follows that
	\begin{equation}
	\norm{\tau(U)} = \sup_{z\in U} \norm{\tau(z)} = \sup_{z\in U} \lim_{m\to\infty} \norm{\tau_m(z)} \leq \norm{\Phi_2(B_{\eps_2)}} < 1.
	\end{equation}

\begin{lemma}
For all $m\geq 2$, $\tau_{m} \in X \otimes \mc A_{T'(0)}$.
\end{lemma}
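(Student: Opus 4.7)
The plan is to show by induction on $m$ that $\tau_m$ is a continuous polynomial map from $X$ to $X$, i.e., $\tau_m \in \mc P(X;X)$, and then invoke the tensor-product representation \eqref{eq:vector-valued-m-polynomial-tensor-product} together with Corollary \ref{cor:generated-algebra-polynomials} to rewrite this as $X \otimes \mc A_{T'(0)}$.

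For the base case $m = 2$, one has $\tau_{2} = \Phi_{2} = I_{X} + Q_{2}$, which is the sum of a $1$-homogeneous polynomial and a $2$-homogeneous polynomial, so $\tau_{2} \in \mc P(\prescript{1}{}{X};X) \oplus \mc P(\prescript{2}{}{X};X) \subset \mc P(X;X)$. For the inductive step, assume $\tau_{m-1} \in \mc P(X;X)$, and decompose $\tau_{m-1} = \sum_{j=1}^{N} P_{j}$ with $P_{j} \in \mc P(\prescript{j}{}{X};X)$ (no constant term arises, since each $\Phi_{k}$ fixes $0$). Then
\begin{equation*}
\tau_{m} = \tau_{m-1} \circ \Phi_{m} = \sum_{j=1}^{N} P_{j} \circ \Phi_{m}.
\end{equation*}
For each $j \geq 2$, Lemma \ref{lem:composition-homogeneous-polynomials} applied with outer $Q_{t} = P_{j}$ (degree $t = j$) and inner $P = \Phi_{m} = I_{X} + Q_{m}$ (homogeneous parts of degrees $1$ and $m$) gives $P_{j} \circ \Phi_{m} \in \bigoplus_{k=j}^{jm} \mc P(\prescript{k}{}{X};X)$. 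For $j = 1$, $P_{1}$ is linear, so $P_{1} \circ \Phi_{m} = P_{1} + P_{1} \circ Q_{m} \in \mc P(\prescript{1}{}{X};X) \oplus \mc P(\prescript{m}{}{X};X)$. Summing over $j$ yields $\tau_{m} \in \mc P(X;X)$, completing the induction.

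For the final identification, \eqref{eq:vector-valued-m-polynomial-tensor-product} gives $\mc P(\prescript{k}{}{X};X) = \mc P(\prescript{k}{}{X};\C) \otimes X$ for every $k \geq 0$ (using that algebraic and continuous polynomials coincide on the finite-dimensional $X$); taking the algebraic direct sum over $k$ yields $\mc P(X;X) = \mc P(X;\C) \otimes X$. Corollary \ref{cor:generated-algebra-polynomials} identifies $\mc P(X;\C)$ with the principle algebra $\mc A_{T'(0)}$, so $\mc P(X;X) = X \otimes \mc A_{T'(0)}$, and the conclusion $\tau_{m} \in X \otimes \mc A_{T'(0)}$ follows.

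There is no substantive obstacle here; the argument is essentially bookkeeping combining the closure of polynomial maps under composition (via Lemma \ref{lem:composition-homogeneous-polynomials}) with the tensor-product representation of vector-valued polynomials. The only mild subtlety is that Lemma \ref{lem:composition-homogeneous-polynomials} requires the outer degree to be at least $2$, which forces a separate (trivial) treatment of the linear piece that appears in $\tau_{m-1}$ after expansion.
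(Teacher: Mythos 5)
Your proof is correct and takes essentially the same route as the paper: both arguments proceed by induction on $m$, with the inductive step reducing to the fact that $X$-valued polynomials on $X$ are closed under composition. The paper simply asserts that closure, whereas you make it explicit by invoking Lemma \ref{lem:composition-homogeneous-polynomials} and handling the degree-one piece separately; you also spell out the identification $\mc P(X;X) = X \otimes \mc A_{T'(0)}$, which the paper treats as understood.
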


\begin{proof}
Trivially $\tau_{2} = I_X + Q_{2} \in X \otimes \mc A_{T'(0)}$. We prove the result by induction. For all $m\geq 3$, assume $\tau_{m-1} \in X \otimes \mc A_{T'(0)}$. Now, $\tau_{m} = \tau_{m-1} \circ \Phi_{m}$. Since $X \otimes \mc A_{T'(0)}$ is the space of $X$-valued polynomials on $X$ it is closed under composition. It follows that $\tau_{m} \in X \otimes \mc A_{T'(0)}$ since $Q_{m} \in X \otimes \mc A_{T'(0)}$.
\end{proof}

Given an eigenfunction in $\mc A_{T'(0)}$ of the operator $U_{T'(0)}$, a sequence of approximate eigenfunctions of $U_T$ can be generated by composing the original eigenfunction with the inverse $\tau_{m}^{-1}$ of each approximate conjugacy. One can interpret the following result to say that eigenvalues of $U_{T'(0)}$ are in the pseudo-spectrum of $U_{T}$.

\begin{proposition}\label{prop:approximate-eigenfunction}
Let $\psi \in \mc A_{T'(0)}$ be an eigenfunction of $U_{T'(0)}$ at eigenvalue $\mu \in \C$, $m\geq 2$, and $x = \tau_{m}(z)$ for $z \in U_{m}$. Then 
	\begin{equation}\label{eq:approximate-eigenfunction}
	U_{T}(\psi \circ \tau_{m}^{-1})(x) = \mu~ (\psi \circ \tau_{m}^{-1})(x) + \mc{O}(\norm{z}^{m+1})
	\end{equation}
as $z \to 0$.
\end{proposition}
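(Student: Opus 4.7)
\bigskip
\noindent\textbf{Proof plan.} The plan is to push the evaluation through the conjugacy and reduce the computation to properties of $T_m$ already established in Proposition \ref{prop:normal-form-induction}. Writing $x = \tau_m(z)$, by the commutative diagram defining $T_m$ we have $T(\tau_m(z)) = \tau_m(T_m(z))$, hence
\begin{equation*}
U_T(\psi\circ\tau_m^{-1})(x) = (\psi\circ\tau_m^{-1})\bigl(T(\tau_m(z))\bigr) = \psi(T_m(z)).
\end{equation*}
By Proposition \ref{prop:normal-form-induction} applied inductively, $T_m(z) = T'(0)z + R_{m+1}(z)$ with $R_{m+1}(z) \in \mc O(\norm{z}^{m+1})$ as $z \to 0$. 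So the task reduces to controlling $\psi(T'(0)z + R_{m+1}(z))$.

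Next, I would split this expression into its principal part and a remainder. The principal part is $\psi(T'(0)z) = (U_{T'(0)}\psi)(z) = \mu\,\psi(z)$, and since $z = \tau_m^{-1}(x)$ this is exactly $\mu\,(\psi\circ\tau_m^{-1})(x)$, the desired leading term. It remains to show that the remainder $\psi(T'(0)z + R_{m+1}(z)) - \psi(T'(0)z)$ is $\mc O(\norm{z}^{m+1})$.

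For this step, I would invoke Corollary \ref{cor:generated-algebra-polynomials}, which tells us $\psi \in \mc A_{T'(0)} = \mc P(X;\C)$, i.e.\ $\psi$ is a polynomial of some finite degree $d$. Writing $u = T'(0)z$ and $v = R_{m+1}(z)$, Taylor's theorem for polynomials (or equivalently the symmetric $d$-linear representation \eqref{eq:canonical-polynomial-map} and the binomial formula \eqref{eq:binomial-formula} applied coordinatewise) gives a \emph{finite} expansion
\begin{equation*}
\psi(u+v) - \psi(u) = \sum_{k=1}^{d} \frac{1}{k!}\,\psi^{(k)}(u)(v^{k}).
\end{equation*}
Each term carries at least one factor of $v$, and $\norm{v} = \norm{R_{m+1}(z)} \leq C_{1}\norm{z}^{m+1}$ on a neighborhood of $0$. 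Because $\psi^{(k)}$ is itself a polynomial and $\norm{u} \leq \norm{T'(0)}\norm{z}$ stays bounded as $z \to 0$, each $\psi^{(k)}(u)$ is bounded uniformly in a neighborhood of $0$. Hence the full remainder is bounded by a constant times $\norm{z}^{m+1}$, yielding the claimed $\mc O(\norm{z}^{m+1})$ estimate.

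I do not expect a serious obstacle here: the whole argument rides on two facts already in hand, namely (i) the conjugacy relation $T\circ\tau_m = \tau_m\circ T_m$ with $T_m$ having its nonlinear tail of order $m+1$, and (ii) the polynomial (hence $C^\infty$) nature of $\psi$, which makes the Taylor remainder trivially controlled. The only mild care needed is to ensure the estimates are genuinely uniform on a neighborhood where $z$ stays small enough that both $u$ and $v$ lie in the domain where $\psi^{(k)}$ is bounded; this is automatic since $z \in U_m \subset B_{\eps_m} \subset B_1$ by construction.
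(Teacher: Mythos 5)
Your proposal is correct and follows essentially the same path as the paper's proof: reduce via the conjugacy relation $T\circ\tau_m = \tau_m\circ T_m$ to $\psi(T_m(z))$, invoke Proposition \ref{prop:normal-form-induction} for the $\mc O(\norm{z}^{m+1})$ tail, and then exploit the fact that $\psi$ is a polynomial (Corollary \ref{cor:generated-algebra-polynomials}) to bound $\psi(u+v)-\psi(u)$ by a finite expansion in which every term carries a factor of $v$. The only cosmetic difference is that you organize the expansion by Taylor/derivative order while the paper expands each homogeneous component $A_j^s$ separately via the binomial formula \eqref{eq:binomial-formula}; these are the same estimate regrouped.
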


\begin{proof}
By definition, $T = \tau_{m} \circ T_{m} \circ \tau_{m}^{-1}$. First, we show, for $x = \tau_{m}(z)$, that $U_{T}(\psi \circ \tau_{m}^{-1})(x) = U_{T_m}\psi(z)$. Indeed,
	\begin{align*}
	U_{T}(\psi \circ \tau_{m}^{-1})(x) = (\psi \circ \tau_{m}^{-1}) \circ \tau_{m} \circ T_{m} \circ \tau_{m}^{-1}(x) 
	= \psi \circ T_{m} \circ \tau_{m}^{-1}(x) 
	&= U_{T_m}\psi(\tau_{m}^{-1}(x)) \\
	&= U_{T_m}\psi(z).
	\end{align*}
By proposition \ref{prop:normal-form-induction}, $T_{m}(z) = T'(0)z + R_{m+1}(z)$, where $R_{m+1}(z) \in \mc{O}(\norm{z}^{m+1})$ as $z\to 0$. Then
	\begin{align*}
	\norm{U_{T_m}\psi(z) - \mu\, \psi(z)} 
	&= \norm{U_{T_m}\psi(z) - U_{T'(0)}\psi(z)} \\
	&= \norm{\psi\big(T'(0)z + R_{m+1}(z)\big) - \psi(T'(0)z)}.
	\end{align*}
Now, since $\psi$ is in $\mc P(X;\C)$ by corollary \ref{cor:generated-algebra-polynomials}, then $\psi$ has a representation $\psi = c_{\psi} + \sum_{j=1}^{k} A_{j}^{s}$, where $A_{j}^{s}$ is a symmetric $j$-linear form and $c_\psi$ is a constant. Putting $u = T'(0)z$ and $v = R_{m+1}(z)$ and using the Binomial formula (eq. \eqref{eq:binomial-formula}), we get
	\begin{equation*}
	A_{j}^s((u+v)^m) - A_{j}^{s}(u) = \sum_{i=1}^{j} \binom{j}{i} A^s_{j}(u^{j-i},v^i).
	\end{equation*}
Then, as $z \to 0$,
	\begin{align*}
	\norm{A_{j}^s(T'(0)z + R_{m+1}(z)) - A_{j}^s(T'(0)z)} 
	&\leq \sum_{i=1}^{j} \binom{j}{i} \norm{A^s_{j}}\norm{T'(0)z}^{j-i}\norm{R_{m+1}(z)}^{i} \\
	&\leq \sum_{i=1}^{j} \binom{j}{i} \norm{A^s_{j}}\norm{T'(0)}^{j-i}\norm{z}^{j-i}(C\norm{z}^{m+1})^{i} \\
	&\leq C_j \norm{z}^{m+1}\norm{z}^{j-1}.
	\end{align*}
Combining these bounds for each $j=1,\dots,k$ gives
	\begin{align*}
	\norm{\psi(T'(0)z + R_{m+1}(z)) - \psi(T'(0)z)} 
	&\leq \sum_{j=1}^{k}\norm{A_{j}^s(T'(0)z + R_{m+1}(z)) - A_{j}^s(T'(0)z)} \\
	&\leq \norm{z}^{m+1} \sum_{j=1}^{k} C_j \norm{z}^{j-1} \\
	& \leq C_{\psi} \norm{z}^{m+1}, \qquad (z\to 0).
	\end{align*}

Therefore, $U_{T_m}\psi(z) - U_{T'(0)}\psi(z) \in \mc{O}(\norm{z}^{m+1})$ as $z\to 0$. It follows that 
	\begin{align*}
	U_{T}(\psi\circ \tau_{m}^{-1})(x) 
	&= U_{T_m}\psi(z) = \mu\, \psi(z) +  \mc{O}(\norm{z}^{m+1}) 
	= \mu\, (\psi \circ \tau_{m}^{-1})(x) +  \mc{O}(\norm{z}^{m+1}).
	\end{align*}
\end{proof}

If the limit conjugacy $\tau$ exists so that $T'(0) = \tau^{-1}\circ T \circ \tau$ on a ball a neighborhood $U$, then the approximate eigenfunctions $\psi_m = \psi \circ \tau_{m}^{-1}$ converge pointwise for $x \in V$ to an eigenfunction $\psi \circ \tau^{-1}$ of $U_{T}$.

\begin{corollary}
Assume that the limit conjugacy $\tau : U \to V$ (eq. \eqref{eq:limit-conjugacy}) exists on the neighborhoods $U$ and $V$ of 0. If $\psi \in \mc A$ is an eigenfunction of $U_{T'(0)}$, then $\psi \circ \tau^{-1}$ is an eigenfunction of $U_T$.
\end{corollary}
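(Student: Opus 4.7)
The plan is to do the computation directly from the definition of the Koopman operator and the conjugacy relation, without going back through the approximate eigenfunction argument in Proposition \ref{prop:approximate-eigenfunction}. The assumption that the limit conjugacy $\tau : U \to V$ exists is to be read as saying that $\tau$ is a bona fide topological conjugacy, i.e.\ $T \circ \tau = \tau \circ T'(0)$ on $U$, or equivalently $T = \tau \circ T'(0) \circ \tau^{-1}$ on $V$. This makes sense because by construction $V_{m}$ is $T$-invariant for every $m$, so $V = \bigcap_{m} V_{m}$ is $T$-invariant as well, and $\tau^{-1}(V) = U$.

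Granting this, the argument is a one-line computation. First I would observe that $\psi \circ \tau^{-1}$ is a well-defined continuous function on $V$, since $\tau^{-1} : V \to U$ is a homeomorphism and $\psi \in \mc A_{T'(0)} \subset C(X)$. Then, for $x \in V$, I would write

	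\begin{align*}
	U_{T}(\psi \circ \tau^{-1})(x)
	&= (\psi \circ \tau^{-1})(T(x)) \\
	&= \psi \big( \tau^{-1} \circ \tau \circ T'(0) \circ \tau^{-1}(x) \big) \\
	&= \psi \big( T'(0) \tau^{-1}(x) \big) \\
	&= U_{T'(0)} \psi (\tau^{-1}(x)) \\
	&= \mu\, \psi(\tau^{-1}(x)) \\
	&= \mu\, (\psi \circ \tau^{-1})(x),
	\end{align*}

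where the conjugacy $T = \tau \circ T'(0) \circ \tau^{-1}$ is used in the second line and the hypothesis $U_{T'(0)}\psi = \mu \psi$ in the fifth. This shows $U_{T}(\psi \circ \tau^{-1}) = \mu (\psi \circ \tau^{-1})$ on $V$, so $\psi \circ \tau^{-1}$ is an eigenfunction of $U_{T}$ at the eigenvalue $\mu$.

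There is essentially no obstacle in this direct approach; the main thing to be careful about is the domain on which everything is defined. One could also give a limit-based proof: by Proposition \ref{prop:approximate-eigenfunction}, for each $z \in U_m$ the function $\psi_{m} := \psi \circ \tau_{m}^{-1}$ satisfies $U_{T}\psi_{m}(\tau_m(z)) = \mu\, \psi_{m}(\tau_m(z)) + \mc{O}(\norm{z}^{m+1})$; letting $m \to \infty$ and using pointwise convergence $\tau_{m} \to \tau$ (hence $\tau_{m}^{-1} \to \tau^{-1}$) on $V$ together with continuity of $\psi$ would give the same conclusion. However, this is overkill once the true conjugacy $\tau$ is assumed to exist, so I would present only the direct computation. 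The content of the corollary is really just the well-known observation that Koopman eigenfunctions pull back through topological conjugacies, applied in the present setup.
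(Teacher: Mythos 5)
Your main argument is correct, but it takes a different route from the paper. The paper proves the corollary by appealing to Proposition \ref{prop:approximate-eigenfunction} and letting $m \to \infty$ in the approximate eigenfunction identity \eqref{eq:approximate-eigenfunction}: since $U_m \subset B_{\eps_m}$ with $\eps_m < 1$ (and in fact the nested domains force $\norm{z} < \eps_2 < 1$), the error term $\mc{O}(\norm{z}^{m+1})$ vanishes in the limit, which is what the paper means by ``letting $m\to\infty$.'' Your primary proof instead bypasses the approximate machinery entirely and uses the classical fact that Koopman eigenfunctions pull back through topological conjugacies, via the one-line substitution $T = \tau \circ T'(0) \circ \tau^{-1}$. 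That route is cleaner and more elementary, and you correctly flag the one interpretive issue it rests on: the corollary's hypothesis refers only to the pointwise limit \eqref{eq:limit-conjugacy}, so reading $\tau$ as a genuine conjugacy (rather than just the limit of the $\tau_m$) is an extra step --- justified by the paragraph preceding the corollary, but worth noting, since establishing $T\circ\tau = \tau\circ T'(0)$ from $\tau_m \to \tau$ pointwise requires passing to the limit in $T\circ\tau_m = \tau_m\circ T_m$ with some local uniformity or equicontinuity of the $\tau_m$. The paper's limit-of-approximate-eigenfunctions argument sidesteps that by working directly with \eqref{eq:approximate-eigenfunction}, at the cost of needing $\tau_m^{-1}\to\tau^{-1}$ and continuity of $\psi$ to pass to the limit on both sides, a point it leaves implicit. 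You correctly identify the limit-based argument as the alternative and even reproduce its outline; on balance both approaches are sound, and yours has the merit of making the structural content (``eigenfunctions pull back under conjugacy'') explicit.
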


\begin{proof}
Since $U_{m} \subset B_{\eps_{m}}$ and $\eps_{m} < 1$ (see the remark directly following lemma \ref{lem:compute-inverse}), then $z \in U$ satisfies $\norm{z} < 1$. Letting $m \to \infty$ in \eqref{eq:approximate-eigenfunction} gives the result.
\end{proof}

\section{Real Banach spaces and uniform closures of the principle algebra}\label{sec:uniform-closure-of-A}
Often, we will be working in a real Banach space $X$, and in particular, be interested in observables of a compact neighborhood $K \subset X$ of 0. In this real case, we look at the complexification $X_{\C}$ of $X$ and the extension of $T'(0)$ to $X_{\C}$, which we denote by $T_{\C}'(0) : X_{\C} \to X_{\C}$. If the extended linearization $T_{\C}'(0)$ is diagonalizable, then the uniform closure of the complex algebra $\mc A_{T_{\C}'(0)}$ is either $C_{\C}(K)$, the complex continuous functions on $K$, or the maximal ideal 
	\begin{equation*}
	I_{0} = \set{ f\in C_{\C}(K) \given f(0) = 0},
	\end{equation*}
depending on whether the constant functions are appended to the generated algebra or not.

Recall that the complexification of a real Banach space $X$ is given by $X_{\C} := \set{ (x,y) \given x,y \in X}$ with the addition operation defined as $(x,y) + (u,v) = (x+u, y+v)$ and scalar multiplication defined as $(\xi + i\zeta)(x,y) = (\xi x - \zeta y, \zeta x + \xi y)$, where $\xi,\zeta \in \R$. Usually, $(x,y) \in X_{\C}$ is written as $x + i y$. The complex eigenvectors $e_{j} = x_{j} + i y_{j} \in X_{\C}$ of $T_{\C}'(0)$ show up in complex conjugate pairs; without loss of generality, we  label the complex conjugate pair of $e_{j}$ as $e_{j+1} := e_{j}^* = x_{j} - iy_{j}$. If $\phi_{j} \in X_{\C}^{*}$ is the dual basis $(\phi_{k}(e_{j}) \equiv \inner{e_{j}}{\phi_{k}} = \delta_{j,k}$), it can be shown that for complex-conjugate pairs $e_{j}$ and $e_{j+1} = e_{j}^{*}$ that
	\begin{equation}
	\phi_{j+1}(x) = \inner{x}{\phi_{j+1}} = \overline{\inner{x}{\phi_{j}}} = \overline{\phi_{j}(x)},
	\end{equation}
for all $x \in X$, \emph{the real Banach space}. 

We denote by $\mc A_{K}$ the principle algebra generated by the \emph{complex}-valued principle eigenfunctions $\phi_{j}\vert_{K} : K \to \R$, restricted to $K \subset X$, the \emph{real} compact neighborhood of $0$. This algebra is closed under complex conjugation. Indeed, if 
	\begin{equation*}
	f(x) = \sum_{j=0}^{m} \sum_{\abs{\alpha}=j} c_{j,\alpha} \phi_1(x)^{\alpha_1}\cdots\phi_n(x)^{\alpha_n} \in \mc A_{K}, \qquad (m \in \N_0, \alpha \in \N_0^{n}, c_{j,\alpha} \in \C, x\in K).
	\end{equation*}
then
	\begin{equation*}
	\overline{f(x)} = \sum_{j=0}^{m} \sum_{\abs{\alpha}=j} \overline{c_{j,\alpha}} \overline{\phi_1(x)}^{\alpha_1}\cdots\overline{\phi_n(x)}^{\alpha_n} \in \mc A_{K}.
	\end{equation*}
This is true since each $\overline{\phi_{j}}$ is either $\phi_{j}$ if it is a real functional or its complex-conjugate $\phi_{j+1}$, otherwise.

\begin{proposition}
Let $X$ be a real Banach space and $T: X\to X$ be a polynomial diffeomorphism with asymptotically stable, hyperbolic fixed point. The uniform closure of $\mc A_{K}$ is 
\begin{compactenum}[(a)]
\item $C_{\C}(K)$ if the constant functions are appended to $\mc A_{K}$, or 
\item the maximal ideal $I_{0} = \set{ f \in C_{\C}(K) \given f(0) = 0}$, otherwise.
\end{compactenum}
\end{proposition}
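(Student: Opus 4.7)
My plan is to recognize the proposition as a direct application of the complex Stone-Weierstrass theorem on the compact Hausdorff space $K \subset X$, with the three standard hypotheses already supplied by the construction of $\mc A_K$. Specifically, I will verify: (i) $\mc A_K$ is a subalgebra of $C_\C(K)$ (built into its definition); (ii) $\mc A_K$ is closed under complex conjugation, which is exactly the computation shown immediately before the proposition statement; and (iii) $\mc A_K$ separates points of $K$. For (iii), I will observe that $\set{\phi_1,\dots,\phi_n}$ is the dual basis in $X_\C^*$ to the eigenbasis $\set{e_1,\dots,e_n}$ of $X_\C$, so for distinct $x,y \in K \subset X \subset X_\C$ the nonzero vector $x-y$ must have some nonzero coordinate $\phi_j(x-y) = \phi_j(x) - \phi_j(y) \neq 0$.

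For part (a), I append the constants to obtain a unital, self-conjugate, point-separating subalgebra of $C_\C(K)$, and the complex form of Stone-Weierstrass immediately gives that its uniform closure equals $C_\C(K)$.

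For part (b), I first note that every generator $\phi_j$ vanishes at $0$, so every monomial $\phi_1^{\alpha_1}\cdots \phi_n^{\alpha_n}$ with $\abs{\alpha}\geq 1$ vanishes at $0$, and hence every $f \in \mc A_K$ satisfies $f(0)=0$. Because $I_0$ is uniformly closed in $C_\C(K)$, this yields $\overline{\mc A_K} \subseteq I_0$. For the reverse inclusion, I will use the following reduction to part (a) rather than invoking a separate ``ideal'' form of Stone-Weierstrass: given $f \in I_0$ and $\eps > 0$, apply (a) to the unital algebra $\mc A_K + \C\cdot \one$ to obtain $p \in \mc A_K$ and $c \in \C$ with $\norm{f - (p + c)}_\infty < \eps/2$. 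Evaluating at the fixed point gives

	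\begin{equation*}
	\abs{c} = \abs{(p+c)(0) - f(0)} \leq \norm{f - (p+c)}_\infty < \eps/2,
	\end{equation*}

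so $\norm{f - p}_\infty \leq \norm{f-(p+c)}_\infty + \abs{c} < \eps$. Hence $f \in \overline{\mc A_K}$, proving $\overline{\mc A_K} = I_0$.

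There is no real obstacle here; the only thing to be a bit careful about is the point-separation argument, since the $\phi_j$ live naturally on the complexification $X_\C$ and we need to separate points of the real compact set $K$. This is handled by the remark that the restriction of a dual basis on $X_\C$ to $X$ still separates points of $X$, because the $\C$-linear functionals $\phi_j$ in particular distinguish any nonzero vector of $X$. The complex conjugation closure, which is the only place where realness of $X$ plays a role, is already established in the paragraph preceding the proposition.
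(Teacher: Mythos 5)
Your proof is correct and follows the same basic route as the paper: verify the three Stone-Weierstrass hypotheses (subalgebra, separation of points, closure under conjugation) and invoke the theorem. The paper's own proof is terser, simply asserting these properties and citing Stone-Weierstrass, implicitly relying on the version of the theorem for non-unital self-adjoint algebras (which characterizes the closure as either all of $C_\C(K)$ or a maximal ideal $I_p$). Where you depart from the paper is in part (b): rather than invoking that non-unital form of the theorem, you reduce (b) to (a) by approximating $f \in I_0$ with $p+c$ from the unitalized algebra, then bounding $\abs{c} = \abs{(p+c)(0) - f(0)} < \eps/2$ to discard the constant. That reduction is correct (it relies on the fact, which you note, that every element of $\mc A_K$ vanishes at $0$) and has the advantage of only needing the standard unital complex Stone-Weierstrass theorem, making the argument more self-contained. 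Your observation about why the $\phi_j$, which live on $X_\C^*$, still separate points of the real compact set $K \subset X$ is a useful clarification that the paper leaves implicit.
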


\begin{proof}
$\mc A_{K}$ is a complex algebra that separates points and is closed under complex conjugation. The Stone-Weirestrass theorem gives the results.
\end{proof}

\begin{corollary}
If the topologically conjugacy $\tau : U \to V$ exists, let $K \subset U$ be a compact neighborhood of 0 and $L = \tau(K) \subset V$. Then $\mc A_{L} = \mc A_{K} \circ \tau^{-1} := \set{f \circ \tau^{-1} \in C_{\C}(L) \given f \in \mc A_{K}}$ is either 
\begin{compactenum}[(a)]
\item $C_{\C}(L)$ if the constant functions are appended to $\mc A_{K}$, or 
\item the maximal ideal $I_{0} = \set{ f \in C_{\C}(L) \given f(0) = 0}$, otherwise.
\end{compactenum}
\end{corollary}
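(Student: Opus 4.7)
The plan is to push the previous proposition across the homeomorphism $\tau$, exploiting that composition with a homeomorphism preserves uniform closures. Specifically, I would set up the pull-back map $\Psi : C_{\C}(K) \to C_{\C}(L)$ defined by $\Psi(f) = f \circ \tau^{-1}$, and show it is an isometric $*$-algebra isomorphism. This is the conceptual core: once $\Psi$ is a sup-norm isometry carrying $\mc A_{K}$ onto $\mc A_{L}$, the uniform closure of $\mc A_{L}$ must be exactly $\Psi$ applied to the uniform closure of $\mc A_{K}$, and the preceding proposition identifies the latter.

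First I would verify the basic properties of $\tau$ restricted to $K$. Since $\tau : U \to V$ is a homeomorphism and $K \subset U$ is compact, its image $L = \tau(K) \subset V$ is compact, and $\tau|_{K} : K \to L$ is a homeomorphism. Because $\tau$ conjugates $T'(0)$ to $T$ and $0$ is the (unique, hyperbolic) fixed point of both dynamics in the neighborhood, we have $\tau(0) = 0$. Then $\Psi$ is linear, multiplicative, conjugation-preserving, and $\norm{\Psi(f)}_{L,\infty} = \sup_{y \in L} |f(\tau^{-1}(y))| = \sup_{x \in K} |f(x)| = \norm{f}_{K,\infty}$, so $\Psi$ is an isometric bijection $C_{\C}(K) \to C_{\C}(L)$ with inverse $g \mapsto g \circ \tau$.

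Next I would transport the closure. By definition $\mc A_{L} = \Psi(\mc A_{K})$. Since $\Psi$ is an isometry, $\overline{\mc A_{L}} = \overline{\Psi(\mc A_{K})} = \Psi\bigl(\overline{\mc A_{K}}\bigr)$, where closures are taken in the respective sup-norm topologies. The previous proposition gives two cases for $\overline{\mc A_{K}}$: either all of $C_{\C}(K)$ (if the constants are appended) or the maximal ideal $I_{0}(K) = \{ f \in C_{\C}(K) : f(0) = 0\}$. Applying $\Psi$, case (a) yields $\Psi(C_{\C}(K)) = C_{\C}(L)$ by surjectivity. For case (b), using $\tau^{-1}(0) = 0$, a function $g = \Psi(f) = f \circ \tau^{-1}$ satisfies $g(0) = f(\tau^{-1}(0)) = f(0)$, so $\Psi$ maps $I_{0}(K)$ bijectively onto $I_{0}(L) = \{ g \in C_{\C}(L) : g(0) = 0\}$, giving the claim.

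I do not anticipate a real obstacle here, as everything reduces to the elementary observation that pre-composition with a homeomorphism of compact spaces is an isometric isomorphism of the $C_{\C}$-algebras. The only subtle bookkeeping item is confirming $\tau(0) = 0$, which is forced by the conjugacy relation $T \circ \tau = \tau \circ T'(0)$ together with the uniqueness of the fixed point in the neighborhood, and the fact that pre-composition commutes with conjugation so that $\mc A_{L}$ remains closed under complex conjugation --- but this last point is not even needed since we are inheriting the closure from $\overline{\mc A_{K}}$ directly via $\Psi$.
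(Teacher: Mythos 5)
Your proof is correct, and it takes a somewhat different route from the paper's. The paper's proof is short and direct: it notes that $L$ is compact, that $\mc A_L$ is an algebra closed under complex conjugation, and that $\mc A_L$ separates points because $K$ and $L$ are homeomorphic through $\tau$ and $\mc A_K$ separates points; then it re-applies the Stone--Weierstrass theorem to $\mc A_L$ from scratch. You instead package the homeomorphism $\tau|_K : K \to L$ into the pull-back map $\Psi(f) = f \circ \tau^{-1}$, show that $\Psi$ is an isometric $*$-algebra isomorphism $C_{\C}(K) \to C_{\C}(L)$, and then \emph{transport} the already-established identification of $\overline{\mc A_K}$ through $\Psi$, using the fact that an isometric bijection commutes with taking uniform closures. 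The two arguments rest on the same underlying fact (the homeomorphism $K \cong L$), but yours avoids re-verifying the Stone--Weierstrass hypotheses for $\mc A_L$ --- separation of points and closure under conjugation are inherited automatically from $\mc A_K$ via the isomorphism rather than checked anew --- and it makes explicit the functorial nature of the correspondence. You are also more careful than the paper in flagging and justifying the needed fact $\tau(0) = 0$, which is used implicitly when identifying the maximal ideal $I_0(L)$ as the pull-back of $I_0(K)$. Both proofs are sound; yours is slightly more structured, the paper's is slightly shorter.
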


\begin{proof}
The set $L = \tau(K)$ is compact since $\tau$ is continuous and $K$ is compact. It is easy to show that $\mc A_{L}$ is an algebra that is closed under complex conjugation. Since $U$ and $V$ are homeomorphic through $\tau$ and $\mc A_{K}$ separates points, then $\mc A_{L}$ also separates points. The Stone-Weierstrass theorem gives the results.
\end{proof}

\section{Conclusions}\label{sec:conclusions}
This paper investigated the connection between the existence of a topological conjugacy between a diffeomorphism $T$ and its linearization $T'(x_0)$ around an asymptotically stable fixed point and the principle eigenfunctions of the Koopman operator $U_{T'(0)}$. The conjugacy $\tau$ from the linear dynamics to the nonlinear dynamics is generated using principle eigenfunctions of $U_{T'(0)}$. Specifically, the principle Koopman eigenfunctions generate an algebra $\mc A_{T'(0)}$ of observables for the linear dynamics from which we can generate a sequence of approximate conjugacies $\set{\tau_{m}}$ satisfying $\tau_m^{-1} \circ T \circ \tau_{m} = T'(x_0) + \mc{O}(\norm{z}^{m+1})$. These approximate conjugacies are in $X \otimes \mc A_{T'(0)}$, the space of $X$-valued polynomials generated by the principle eigenfunctions of $U_{T'(0)}$. As long as the principle eigenfunctions are non-resonant for all orders, then this process can continue in order to get a conjugacy between the linear and nonlinear dynamics $\tau$.

The use of this connection is to generate ``good'' spaces of observables for the nonlinear dynamics. We know by construction that each element in the space of observables $\mc A_{T'(0)}$ for the linear dynamics has an expansion into eigenfunctions of $U_{T'(0)}$. When constructing a space of observables for the nonlinear dynamics, we would also like to have this property. The question is then how to construct such a space and what relation, if any, it has to $\mc A_{T'(0)}$.

If we have built a conjugacy $\tau^{-1}$ from the nonlinear dynamics to the linear dynamics, every element of the space $\mc A_{T'(0)} \circ \tau^{-1} = \set{ g \circ \tau^{-1} \given g \in \mc A_{T'(0)}}$ has an expansion into eigenfunctions of $U_{T}$. The results of this paper build $\tau$, and ultimately $\tau^{-1}$, from a sequence of \emph{polynomial} transformations $\tau_{m}$ generated from the principle Koopman eigenfunctions. $\tau^{-1}$ is an analytic function and for every $g\in \mc A_{T'(0)}$, the observable $g \circ \tau^{-1}$ is an analytic function generated by the principle Koopman eigenfunctions. Thus the Koopman eigenfunctions for the linearized system generate a space of observables for the nonlinear dynamics for which every element has a spectral expansion into eigenfunctions of the Koopman operator associated with the \emph{nonlinear} dynamics. Even if we terminate the construction of the conjugacy at a finite number of steps (giving $\tau_{m}$), we can get an approximate eigenfunction for $U_{T}$ which is within $\mc{O}(\norm{z}^{m+1})$ of a true eigenfunction of $U_{T}$.

Often we work with real Banach spaces and diffeomorphisms on them. If we again linearize around a fixed point and then look at the complexification, we can generate a complex algebra of functions from the principle eigenfunctions that is closed under complex conjugation and whose domain is a compact neighborhood of the fixed point. Appealing to the Stone-Weierstrass theorem allows us to make statements about the uniform closure of the principle algebra. In particular, if the constant functions have been appended to the algebra, the uniform closure of the algebra is the space of complex continuous function. On the other hand, if we have not appended the constant functions to the principle algebra, then every element of the algebra vanishes at the fixed point, and the uniform closure of the algebra is the maximal ideal of complex continuous functions that vanishes at the fixed point. We can get an algebra of observables for the nonlinear dynamics by pulling back the algebra of observables for the linear dynamics using the topological conjugacy. Since the spaces are homeomorphic via the conjugacy, the algebra of observables for the nonlinear dynamics inherits the uniform closure properties of the principle algebra of observables for the linear dynamics. What these results says is that if we choose a continuous observable for our system, we know that it is arbitrarily close, in the uniform norm, to an observable that has a spectral expansion. From another viewpoint, every continuous observable is a perturbation of an observable which has an expansion into Koopman eigenfunctions.


\appendix

\section{Compositions of vector-valued polynomials}\label{app:polynomial-proofs}

\begin{proof}[Proof of lemma \ref{lem:composition-homogeneous-polynomials}]
Let $\check Q$ be the $t$-linear map that gives $Q$. By assumption $P(x) = \sum_{k=\ell}^{m} P_{k}(x)$, where $P_{k} \in  \mc P(\prescript{k}{}{X};X)$. Let $\check P_k \in \mc L(\prescript{k}{}{X};X)$ be the $k$-linear map giving $P_k$. $\check Q$ and $\check P_k$ have representations of the form \eqref{eq:canonical-m-linear-map}. Then
	\begin{align*}
	Q\circ P(x) = \check Q( (P(x))^t ) 
	&= \sum_{j_1,\dots,j_t = 1}^{n} q_{j_1,\dots, j_t} \phi_{j_1}(P(x)) \cdots \phi_{j_t}(P(x)) \\
	&= \sum_{j_1,\dots,j_t = 1}^{n} q_{j_1,\dots, j_t} \prod_{i=1}^{t} \phi_{j_i}(P(x)).
	\end{align*}
where $q_{j_1,\dots, j_t} \in X$.
For any $i \in \set{1,\dots, t}$, we have $\phi_{j_i}(P(x)) = \sum_{k=\ell}^{m}\phi_{j_i}( P_{k}(x) ) = \sum_{k=\ell}^{m}\phi_{j_i}( \check P_{k}(x^k) )$. Let $\check P_{k}(x_1,\dots, x_k)$ be
	\begin{equation*}
	\check P_{k}(x_1,\dots, x_k) = \sum_{s_1,\dots, s_k = 1}^{n} p_{s_1,\dots,s_k} \prod_{\alpha=1}^{k} \phi_{s_\alpha}(x_\alpha),
	\end{equation*}
where $p_{s_1,\dots,s_k} \in X$ and $s_\alpha \in \set{1,\dots, n}$.
Then 
	\begin{equation*}
	w_{j_i,k}(x) := \phi_{j_i}(P_k(x)) = \phi_{j_i}( \check P_{k}(x^k) ) = \sum_{s_1,\dots, s_k = 1}^{n} \phi_{j_i}(p_{s_1,\dots,s_k}) \prod_{\alpha=1}^{k} \phi_{s_\alpha}(x) \in \mc P(\prescript{k}{}{X};\C).
	\end{equation*}
It follows that
	\begin{align*}
	\prod_{i=1}^{t} \phi_{j_i}(P(x)) = \prod_{i=1}^{t} (\sum_{k=\ell}^{m}\phi_{j_i}( P_{k}(x) ) ) 
	&= \prod_{i=1}^{t} \sum_{k=\ell}^{m} w_{j_i,k}(x)\\
	&=\sum_{j_1,\dots,j_t = 1}^{n} \sum_{k_1,\dots, k_t=\ell}^{m} \prod_{i=1}^{t} w_{j_i,k_i}(x) \\
	&\in \bigoplus_{i = t\ell}^{tm} \mc P(\prescript{k}{}{X};\C).
	\end{align*}
Finally
	\begin{align*}
	Q\circ P(x) = \sum_{j_1,\dots,j_\ell = 1}^{n} q_{j_1,\dots, j_\ell} \prod_{i=1}^{\ell} \phi_{j_i}(P(x)) \in \bigoplus_{k=t\ell}^{tm} \mc P(\prescript{k}{}{X};X).
	\end{align*}
\end{proof}

\section{Proof of proposition \ref{prop:normal-form-induction}}\label{app:normal-form-induction-proof}

\begin{proof}
The proof is split into steps for better readability. Let $x = \Phi_{m+1}(z) = (I_X + Q_{m+1})(z)$ and consider $T_{m+1}(z) = (\Phi_{m+1}^{-1}\circ T_{m} \circ \Phi_{m+1})(z)$.
\begin{enumerate}[(i)]
\item By corollary \ref{cor:asymptotic-inverse},
	\begin{equation}\label{eq:Tm+1}
	T_{m+1}(z) = T_{m}(\Phi_{m+1}(z)) - Q_{m+1}(T_{m}(\Phi_{m+1}(z))) + \mc{O}(\norm{T_{m}(\Phi_{m+1}(z))}^{2m+1})
	\end{equation}
\item Consider $T_{m}(\Phi_{m+1}(z))$:
	\begin{align}
	T_{m}(\Phi_{m+1}(z)) &= T'(0)\Phi_{m+1}(z) + R_{m+1}(\Phi_{m+1}(z)) \nonumber \\
	&= T'(0)z + T'(0)Q_{m+1}(z) + R_{m+1}(z + Q_{m+1}(z)) \nonumber \\
	&= T'(0)z + T'(0)Q_{m+1}(z) + R_{m+1}(z) + \mc{O}(\norm{z}^{2m}) \label{eq:Tm-asymptotic}
	\end{align}
where we have used the Binomial formula to expand and bound $R_{m+1}(\Phi_{m+1}(z))$.
\item Consider $Q_{m+1}(T_{m}(\Phi_{m+1}(z)))$:
	\begin{align*}
	Q_{m+1}(T_{m}(\Phi_{m+1}(z))) &= Q_{m+1}(T'(0)\Phi_{m+1}(z) + R_{m+1}(\Phi_{m+1}(z))) \\
	&= Q_{m+1}(T'(0)z + T'(0)Q_{m+1}(z) + R_{m+1}(\Phi_{m+1}(z))) \\
	&= Q_{m+1}(T'(0)z + S_{m+1}(z)),
	\end{align*}
where we have defined $S_{m+1}(z) = T'(0)Q_{m+1}(z) + R_{m+1}(\Phi_{m+1}(z))$. Let $B^s$ be the symmetric $(m+1)$-linear form defining $Q_{m+1}$. Then using the Binomial formula
	\begin{align*}
	&\norm{Q_{m+1}(T'(0)z + S_{m+1}(z)) - Q_{m+1}(T'(0)z)} \\
	&\qquad\leq \sum_{j=1}^{m+1} \binom{m+1}{j} \norm{B^{s}} \norm{T'(0)z}^{m+1-j} \norm{S_{m+1}(z)}^j \\
	&\qquad\leq \sum_{j=1}^{m+1} \binom{m+1}{j} \norm{B^{s}} \norm{T'(0)}^{m+1-j} \norm{S_{m+1}(z)}^j \norm{z}^{m+1-j}.
	\end{align*}
Furthermore
	\begin{align*}
	\norm{S_{m+1}(z)} &\leq \norm{T'(0)Q_{m+1}(z)} + \norm{R_{m+1}(\Phi_{m+1}(z))} \\
	&\leq \norm{T'(0)}\norm{Q_{m+1}(z)} + \norm{R_{m+1}}\norm{\Phi_{m+1}(z)}^{m+1} \\
	&\leq \norm{T'(0)}\norm{Q_{m+1}}\norm{z}^{m+1} + \norm{R_{m+1}}(\norm{\Phi_{m+1}}\norm{z}^{m+1})^{m+1} \\
	&\leq \norm{z}^{m+1} (C_1 + C_2 \norm{z}^{(m+1)^2 - (m+1)})
	\end{align*}
for appropriate positive constants $C_1,C_2$. Therefore
	\begin{align*}
	&\norm{Q_{m+1}(T'(0)z + S_{m+1}(z)) - Q_{m+1}(T'(0)z)} \\
	&\qquad \leq \sum_{j=1}^{m+1} \binom{m+1}{j} C_3 \norm{z}^{m+1-j} \norm{z}^{j(m+1)} (C_1 + C_2 \norm{z}^{(m+1)^2 - (m+1)})^{j} \\
	&\qquad \in \mc{O}(\norm{z}^{2m+1}) \qquad (\text{as } y \to 0).
	\end{align*}
Therefore
	\begin{equation}\label{eq:Qm-asymptotic}
	Q_{m+1}(T_{m}(\Phi_{m+1}(z))) = Q_{m+1}(T'(0)z) + \mc{O}(\norm{z}^{2m+1})\qquad (\text{as } z \to 0).
	\end{equation}
\item Using \eqref{eq:Tm-asymptotic} and \eqref{eq:Qm-asymptotic} in \eqref{eq:Tm+1} gives
	\begin{align*}
	T_{m+1}(z) &= T'(0)z + T'(0)Q_{m+1}(z) + R_{m+1}(z) + \mc{O}(\norm{z}^{2m}) \\
	&\qquad - \left[  Q_{m+1}(T'(0)z) + \mc{O}(\norm{z}^{2m+1}) \right] \\
	&\qquad \mc{O}(\norm{T_{m}(\Phi_{m+1}(z))}^{2m+1}) \\
	&= T'(0)z + \left(R_{m+1}(z) - \mc L_{T'(0)}^{(m+1)} Q_{m+1}(z) \right) + \mc{O}(\norm{z}^{2m})
	\end{align*}
Split $R_{m+1}(z) = \hat R_{m+1}(z) + \mc{O}(\norm{z}^{m+2})$ where $\hat R_{m+1} \in \mc P(\prescript{m+1}{}{X}; X)$. Since $\mc L_{T'(0)}^{(m+1)}$ is invertible, we can choose $Q_{m+1}$ to eliminate $\hat R_{m+1}$. With this choice 
	\begin{equation}
	T_{m+1}(z) = T'(0)z + \mc{O}(\norm{z}^{m+2}) \qquad (z\to 0).
	\end{equation}
It is obvious that $R_{m+2}(z) = T_{m+1}(z) - T'(0)z$ is a power series as it is a composition of polynomials and power series.
\end{enumerate}

This completes the proof.
\end{proof}



\end{document}